\newtheorem{theorem}{Theorem}[section]
\newtheorem{proposition}{Proposition}[section]
\newtheorem{definition}{Definition}[section]
\newtheorem{example}{Example}[section]
\newtheorem{corollary}{Corollary}[section]
\newtheorem{remark}{Remark}[section]
\numberwithin{equation}{section}
\title[]{SPECTRAL TRANSFORMATION ASSOCIATED WITH A PERTURBED $R_I$ TYPE RECURRENCE RELATION }
\author{Vinay Shukla$^\dagger$}
\address{$^\dagger$Department of Mathematics\\ Indian Institute of Technology, Roorkee-247667, Uttarakhand, India}
\email{vshukla@ma.iitr.ac.in}
\author{A. Swaminathan\, $^{\#\ddagger}$}{\thanks{$^{\#}$Corresponding author}}
   \address{$^\ddagger$Department of Mathematics\\ Indian Institute of Technology, Roorkee-247667, Uttarakhand, India}
   \email{mathswami@gmail.com, a.swaminathan@ma.iitr.ac.in}
\begin{document}
\leftline{ \scriptsize \it  }
\keywords{Orthogonal Polynomials; Spectral transformation, $R_I$ type recurrence Relation; Continued fractions; Interlacing of zeros; Carath\'eodary function.}
\subjclass[2020] {42C05, 30B70, 30C15}
\maketitle

\markboth{Vinay Shukla and A. Swaminathan}{Perturbed $R_{I}$ type recurrence relation}

\begin{abstract}
In this work, orthogonal polynomials satisfying $R_I$ type recurrence relation %$\mathcal{P}_{n+1}(z) = (z-c_n)\mathcal{P}_n(z)-\lambda_n (z-a_n)\mathcal{P}_{n-1}(z),$ with $\mathcal{P}_{-1}(z) = 0$ and $\mathcal{P}_0(z) = 1$ 
are analyzed when the recurrence coefficients are modified. The structural relationship between the perturbed and the unperturbed polynomials along with the spectral properties and spectral transformation of continued fraction are investigated. It is demonstrated that the transfer matrix method is computationally more efficient than the classical method for obtaining perturbed $R_I$ polynomials. Further, an interesting consequence of co-dilation on the Carath\'eodary function is presented. Finally, the study of co-recursion and co-dilation in connection to the unit circle is carried out with the help of an illustration. The interlacing and monotonicity of zeros between L-Jacobi polynomials and their perturbed forms are demonstrated. 
\end{abstract}

\section{Introduction}
Consider the recurrence relation
\begin{align}\label{R1}
&  \mathcal{P}_{n+1}(z) = (z-c_n)\mathcal{P}_n(z)-\lambda_n (z-a_n)\mathcal{P}_{n-1}(z), \quad n\geq 0, \\ %\quad \lambda_n \neq 0 ,\quad n\geq 1  
& \nonumber	\mathcal{P}_{-1}(z) = 0, \qquad \mathcal{P}_0(z) = 1. %\qquad \mathcal{P}_n(a_n) \neq 0
\end{align}
It was demonstrated in \cite[Theorem 2.1]{Esmail masson JAT 1995} that if $\lambda_n \neq 0$ and $\mathcal{P}_n(a_n)\neq 0$ for $n\geq 1$, then there exists a rational function $\psi_n(z)=\frac{\mathcal{P}_n(z)}{\prod_{j=1}^{n}(z-a_j)}$ and a linear functional $\mathfrak{N}$ such that the orthogonality relations
\begin{align*}
\mathfrak{N}[a_0]\neq 0, \quad \mathfrak{N}\left[z^k \psi_n(z) \right]\neq  0, \quad 0\leq k < n,
\end{align*}
hold. We can obtain \eqref{R1} by starting with a sequence of rational functions $\psi_n(z)$ that satisfy a three-term recurrence relation and have poles at $\{a_k\}_{k=0}^\infty$. According to \cite{Esmail masson JAT 1995}, the recurrence relation \eqref{R1} is a $R_I$ type recurrence relation, and $\mathcal{P}_n(z), n\geq 1$, generated by \eqref{R1} are $R_I$ polynomials. The $R_I$ type recurrence relation \eqref{R1} has several applications such as defining biorthogonal systems of rational functions, studying multipoint rational interpolants or Pade approximation, evaluating some types of beta integrals, and so on. \par %and associated continued fraction as $R_I$-fractions. 
%The application of the Gram-Schmidt orthonormalization process in the linear space of rational functions yields a sequence of orthonormal rational functions.  \par
Note that the infinite continued fraction
\begin{align}\label{R1 continued fraction}
R_I(z) = \frac{1}{z-c_0} \mathbin{\genfrac{}{}{0pt}{}{}{-}} \frac{\lambda_1(z-a_1)}{z-c_1} \mathbin{\genfrac{}{}{0pt}{}{}{-}} \frac{\lambda_2(z-a_2)}{z-c_2} \mathbin{\genfrac{}{}{0pt}{}{}{-}} \mathbin{\genfrac{}{}{0pt}{}{}{\cdots}} ,
\end{align}
becomes finite when $z=a_k$, $k \geq 2$. Following \cite{Dini Maroni Ronveaux 1989}, we call it a $R_I$-fraction. Polynomials $\mathcal{P}_n(z)$ are the numerator polynomials associated with \eqref{R1 continued fraction}. Also, the polynomials of the second kind \cite[page 7, equation (2.9)]{Esmail masson JAT 1995} associated with recurrence relation \eqref{R1} are given by
\begin{align}\label{Associted polynomials Q_n}
&  \mathcal{Q}_{n+1}(z) = (z-c_n)\mathcal{Q}_n(z)-\lambda_n (z-a_n)\mathcal{Q}_{n-1}(z), \quad n\geq 1, \\ %\quad \lambda_n \neq 0 ,\quad n\geq 1  
& \nonumber	\mathcal{Q}_{0}(z) = 0, \qquad \mathcal{Q}_1(z) = 1,
\end{align}
such that the ratio $\dfrac{\mathcal{Q}_n(z)}{\mathcal{P}_n(z)}$ is the $n$-th convergent of the continued fraction \eqref{R1 continued fraction}. The existence of an $m-$function associated with an $R_I$-fraction was established in \cite[Theorem 2.3]{Esmail masson JAT 1995}. The orthogonal Laurent polynomials (OLP) or L-orthogonal polynomials \cite{dimitrov ranga 2002} are the $R_I$ polynomials \eqref{R1} when $a_n=0$ and $z$ is real. They satisfy the recurrence
\begin{align}\label{special R1}
&\mathcal{P}_{n+1}(x) = (x-c_n)\mathcal{P}_n(x)-\lambda_n x\mathcal{P}_{n-1}(x), \quad n\geq 0, \quad x \in \mathbb{R}, \\    
& \nonumber	\mathcal{P}_{-1}(x) = 0, \qquad \mathcal{P}_0(x) = 1,
\end{align}
where $\{c_n\}_{n \geq 0}$ and $\{\lambda_n\}_{n \geq 0}$ are positive numbers. For more details, we refer to \cite{Simon part1 2005} and the references therein. \cite{Kim stanton 2021} provides a combinatorial interpretation of $R_I$ polynomials. Interested readers may look at \cite{Bracciali ranga swami paraorthogonal 2016,Castillo costa ranga veronese Favard theorem 2014,Costa felix ranga 2013} for some recent progress related to $R_I$ type recurrence relations and $R_I$ polynomials. \par 

% \section{Motivation of the problem}
The study of perturbation of the coefficients of a recurrence relation has considerable literature and has been dealt with in numerous ways by many authors. Constructing new sequences by modifying the original sequence is a powerful tool with many applications to theoretical and physical problems. \par 
Orthogonal polynomials on the real line (OPRL) satisfy the recurrence relation
\begin{align}\label{OPRL TTRR R1 paper}
\mathcal{R}_{n+1}(x) = (x-b_{n})\mathcal{R}_n(x)-\gamma_n \mathcal{R}_{n-1}(x), \quad \mathcal{R}_{-1}(x) = 0, \quad \mathcal{R}_0(x) = 1, \quad n \geq 0.
\end{align}
The case, known as co-recursive, was introduced and studied in \cite{chihara PAMS 1957} by adding a constant to the first coefficient $b_0$ . The case of another type of perturbation called co-dilation was presented in \cite{Dini thesis 1988}. The Stieltjes function and the fourth-order differential equation for the co-dilated polynomials inside the Laguerre-Hahn class were investigated in \cite{Dini Maroni Ronveaux 1989, Ronveaux vigo 1988}. The general case, called the generalized co-modification, arising from perturbation of coefficients in \eqref{OPRL TTRR R1 paper} at any level, was studied in \cite{Paco perturbed recurrence 1990} from the point of view of Stieltjes function, fourth-order differential equation, and distribution of zeros. A connection to the Laguerre-Hahn class was also investigated in \cite{Paco perturbed recurrence 1990}. The properties of co-modified classical orthogonal polynomials were studied in \cite{Ronveaux vigo 1988}. Its extension to semi-classical orthogonal polynomials was investigated in \cite{Dini Maroni Ronveaux 1989, ronveaux belmehdi dini 1990}. Interlacing properties and some new inequalities for the zeros of co-modified OPRL, called co-polynomials on the real line (COPRL), were studied in \cite{Castillo co-polynomials on real line 2015}.
%Such perturbations are not artificial because they are linked to perturbation in the entries of the corresponding Jacobi matrix, which in turn, is useful in further applications like the study of Hamiltonian operator appearing in quantum mechanics in the study of many body systems. Motivated by this problem, the co-recursive, co-dilated and co-modified polynomials have been introduced and their properties have been studied. Later, while some experiments with lasers, it was needed to make perturbations anywhere on the surface of the target many-body system and to study how the spectroscopic properties gets changed. This led to the study of the generalized co-recursive, co-dilated and co-modified polynomials \cite{Paco perturbed recurrence 1990}. 
In this direction, finite perturbations were studied in \cite{Peherstorfer 1992}. When the perturbations are large, some information is expected to be lost, which was explored in \cite{Leopold 1 2003}. For perturbations of recurrence coefficients in the recurrence relations of higher-order and their extensions to Sobolev OPRL, see \cite{Leopold 2 2007, Leopold 3 2008}. Recently, a transfer matrix approach was introduced in \cite{Castillo co-polynomials on real line 2015} to study polynomials perturbed in a generalized co-modified way. \par 
Unlike OPRL, a sequence of monic orthogonal polynomials on the unit circle (OPUC) denoted by $\{\phi_n(z)\}_{n=0}^\infty$ satisfies a first-order recurrence relation (Szeg\H{o} recurrence)
\begin{align}\label{Szego recurrence}
\begin{bmatrix}
	\phi_{n+1}(z)	\\
	\phi^*_{n+1}(z)
\end{bmatrix} & = T_n(z) \begin{bmatrix}
	\phi_{n}(z)	\\
	\phi^*_{n}(z)
\end{bmatrix}, \quad T_n(z) = \begin{bmatrix}
	z &	-\overline{\alpha_n} \\ 
	-\alpha_n z & 1
\end{bmatrix},
\end{align}
with initial condition $\phi_0(z)=1$, where $\phi^*_{n}(z)=z^n \overline{\phi_n(\frac{1}{\bar{z}})}$ is the reversed polynomial, and $T_n(z)$ is the transfer matrix \cite{Simon part1 2005}. The elements of the sequence $\{\alpha_n\}_{n=0}^\infty$ where $\alpha_n = -\overline{\phi_{n+1}(0)}$ lie in the unit disc and are known as Schur, reflection, Geronimus or Verblunsky coefficients. \par 
The monic OPUC is completely determined by its reflection coefficients (Verblunsky theorem). This fact motivated the authors in \cite{Castillo perturbed szego 2014} to study polynomials associated with perturbations of Verblunsky coefficients. Again, a transfer matrix approach has been used to study so-called co-polynomials on the unit circle (COPUC). The structural relations and rational spectral transformation for C-functions associated with COPUC are also discussed in \cite{Castillo perturbed szego 2014} (see also \cite{Castillo perturbed polynomials via szego transform 2017}). \par  
In this direction, results related to co-recursive of $d-$ orthogonal polynomials \cite{saib zerouki 2013},  co-recursive of $q-$ classical orthogonal polynomials \cite{Foupouagnigni Ronveaux 2001}, fourth-order difference equation and its factorization for co-dilated and/or co-recursive of classical discrete orthogonal polynomials \cite{Foupouagnigni Koepf Ronveaux 2003} are available in the literature. For some applications, see \cite{Erb 2012, Erb 2015, Slim 1988}. \par 
This manuscript aims to study properties of the polynomials that satisfy a recurrence relation as \eqref{R1} with new recurrence coefficients perturbed in a (generalized) co-recursive/co-dilated/co-modified way, i.e.,
\begin{align}\label{R1 perturbed}
	\mathcal{P}_{n+1}(z;\mu_k,\nu_k) = (z-c^*_n)\mathcal{P}_n(z;\mu_k,\nu_k)-\tilde{\lambda}_n (z-a_n)\mathcal{P}_{n-1}(z;\mu_k,\nu_k), \quad n\geq 1,
\end{align}
with initial conditions $\mathcal{P}_0(z) = 1$ and $\mathcal{P}_{-1}(z) =0$. In other words, we consider arbitrary single modification of recurrence coefficients as follows:
\begin{align}
	c^*_n &= c_n+\mu_k \delta_{n,k}, \quad \mu_k \in \mathbb{R}, \qquad \text{(co-recursive)} \label{co-recursive condition R1}\\
	\tilde{\lambda}_n &= \nu_k^{\delta_{n,k}} \lambda_n, \quad \nu_k>0, \qquad \text{(co-dilated)} \label{co-dilated condition R1}
\end{align}
where $k$ is a fixed non-negative integer. The rest of the paper is summarized as follows: \Cref{The continued fraction} defines co-polynomials of $R_I$ type and analyzes rational spectral transformations associated with the corresponding continued fraction. Structural relations between the perturbed polynomials and the original ones are developed via two methods: the transfer matrix method and the classical matrix method. The computational complexities of both methods for the generation of perturbed $R_I$ polynomials are compared. The obtained results are compared with the obtained results for perturbed $R_{II}$ polynomials in \cite{swami vinay R2 2022}, and some interesting conclusions are drawn. Further, the behaviour of zeros of L-orthogonal polynomials and their perturbed versions are studied. Finally, a result for obtaining the parameters of the PPC-fraction corresponding to a perturbed chain sequence is presented. In \Cref{Illustrative examples}, an illustration is provided for exploring the effects of co-recursion and co-dilation in the chain sequences on Szeg\H{o} polynomials, reflection coefficients, and associated measures. L-Jacobi polynomials are used to illustrate interlacing and monotonicity results on zeros numerically. % Section \ref{A $m-$ function point of view} discusses a rational spectral transformation. occuring in the special form of $R_I$ recurrence, 

\section{co-polynomials of $R_I$ type and Spectral Transformation}\label{The continued fraction}
\subsection{Structural relations}\label{Structural relations} Perturbing the coefficients $c_n$ and $\lambda_n$ at $n=k$ according to Favard theorem \cite[Theorem 2.1]{Esmail masson JAT 1995} generates a new family of $R_I$ polynomials. A more general situation in the finite case can be any change at any level. The recurrence relation of $R_I$ type for generalised co-modified polynomials are given by
\begin{align}\label{Generalised comodified equations}
	 \mathcal{P}_{k+1}(z;\mu_k,\nu_k) &= (z-c_k-\mu_k)\mathcal{P}_k(z;\mu_k,\nu_k)-\nu_k \lambda_k (z-a_k)\mathcal{P}_{k-1}(z;\mu_k,\nu_k),\quad \mbox{$n = k$} \nonumber \\,
	\mathcal{P}_{n+1}(z;\mu_k,\nu_k) &= (z-c_n)\mathcal{P}_n(z;\mu_k,\nu_k)-\lambda_n (z-a_n)\mathcal{P}_{n-1}(z;\mu_k,\nu_k), \quad \mbox{$n \neq k$}.
%	\mathcal{P}_{n+1}(z;\mu_k,\nu_k) &=(z-c_n)\mathcal{P}_n(z;\mu_k,\nu_k)-\lambda_n (z-a_n)\mathcal{P}_{n-1}(z;\mu_k,\nu_k),\quad \mbox{$n \geq k+1$}.
\end{align}
%A single modification of $c_n$ at the level $k$, called generalised co-recursive, modification of $\lambda_n$ at the level $k$, called generalised co-dilated and modification of both $c_n$ and $\lambda_n$ at the same level $k$, called generalised co-modified, are considered.
%The last recurrence relation can be solved in terms of $\mathcal{P}_{n+1}(z)$ and associated polynomials of order $r$, i.e., $\mathcal{P}^{(r)}_{n-r}(z)$ to obtain the representation of new perturbed polynomials in terms of unperturbed ones (see \Cref{Theorem s_k_x R1 pp}).
%\begin{align*}
%\mathcal{P}_{n+1}(z;\mu_k,\nu_k) &= \mathcal{P}_{n+1}(z)+[(1-\nu_k)\lambda_k (z-a_k) \mathcal{P}_{k-1}(z)-\mu_k \mathcal{P}_k(z)] \mathcal{P}^{(k)}_{n-k}(z), \qquad \mbox{$n \geq k$}, \\
%\mathcal{P}_{n+1}(z;\mu_k,\nu_k) &= \mathcal{P}_{n+1}(z), \qquad \mbox{$n < k$}.
%\end{align*}

The recurrence relation of $R_I$ type for generalised co-recursive polynomials $\mathcal{P}_{n+1}(z;\mu_k)$ and for generalised co-dilated polynomials $\mathcal{P}_{n+1}(z;\nu_k)$ and their representation in terms of unperturbed ones can be obtained by substituting $\nu_k=1$ and $\mu_k=0$, respectively, in \eqref{Generalised comodified equations}. 
\begin{remark} 
Co-polynomials on the real line (COPRL) \cite{Castillo co-polynomials on real line 2015} and co-polynomials on the unit circle (COPUC) \cite{Castillo perturbed szego 2014} are obtained after perturbations in \eqref{OPRL TTRR R1 paper} and in \eqref{Szego recurrence}. Following a similar terminology, we can refer to the perturbed polynomials introduced above as \textbf{co-polynomials of $R_I$ type}.
\end{remark}
Let us introduce
\begin{align*}
	\mathbb{F}_{n+1} & :=	\begin{bmatrix}
		\mathcal{P}_{n+1}(z)	& -\mathcal{Q}_{n+1}(z)\\
		\mathcal{P}_{n}(z)	& -\mathcal{Q}_{n}(z)
	\end{bmatrix} = \mathbf{T}_n \mathbb{F}_{n},
	%& = A_n \mathbb{B}_{n} \\
	%& = \begin{bmatrix}
	%	x-c_n & -\lambda_n (x-a_n)(x-b_n) \\
	%	1 & 0
	%\end{bmatrix} \begin{bmatrix}
	%	\mathcal{P}_{n}	& -\mathcal{Q}_{n}\\
	%	\mathcal{P}_{n-1}	& -\mathcal{Q}_{n-1}
	%\end{bmatrix} \nonumber
\end{align*}
where$\{\mathcal{Q}_n(z)\}_{n \geq 0}$ are the associated polynomials of second kind that satisfy the recurrence relation \eqref{special R1} with initial conditions $\mathcal{Q}_0(z) = 0$ and $\mathcal{Q}_1(z) = 1$. They are monic polynomials of degree $n-1$. Clearly, $\mathbb{F}_{n+1} $ can be written as the product of the transfer matrices
\begin{align}\label{F_n+1 to T_0 R1 pp}
	&	\mathbb{F}_{n+1} =  \mathbf{T}_n \mathbb{F}_{n} = \mathbf{T}_n \ldots \mathbf{T}_{k+1}\mathbf{T}_k \mathbf{T}_{k-1} \ldots \mathbf{T}_0,
\end{align}
where $\mathbf{T}_n$ is the transfer matrix given by
\begin{align*}
%	\mathbb{P}_{n+1} &=
%	\begin{bmatrix}
%		\mathcal{P}_{n+1}(x) & \mathcal{P}_n(x)	
%	\end{bmatrix}^T, \quad
	\mathbf{T}_n= \begin{bmatrix}
		z-c_n & -\lambda_n (z-a_n) \\
		1 & 0
	\end{bmatrix} \quad {\rm and} \quad \det(\mathbf{T}_n)=\lambda_n (z-a_n).
\end{align*}
%Let us consider
% Now, from \eqref{special R1}, we have
%\begin{align*}
%	\mathbb{P}_{n+1}&= \mathbf{T}_n \mathbb{P}_n = \begin{bmatrix}
%		x-c_n & -\lambda_n x \\
%		1 & 0
%	\end{bmatrix}
%	\begin{bmatrix}
%		\mathcal{P}_n(x) \\
%		{\mathcal{P}}_{n-1}(x)
%	\end{bmatrix}, 
	%= \begin{bmatrix}
	%	\mathcal{P}_{n+1} \\ \mathcal{P}_n
	%\end{bmatrix}
%\end{align*}
%\begin{align}\label{P_n+1 to P_0}
%	\mathbb{P}_{n+1} &= (\mathbf{T}_n \ldots \mathbf{T}_0) \mathbb{P}_{0}, \qquad
%	\mathbb{P}_{0} =
%	\begin{bmatrix}
%		\mathcal{P}_{0}(x) & \mathcal{P}_{-1}(x)	
%	\end{bmatrix}^T.
%\end{align}
Using matrix notation \eqref{F_n+1 to T_0 R1 pp}, we have
\begin{align}\label{P_n+1 mu_k nu_k R1 pp}
	\mathbb{F}_{n+1}(\mu_k,\nu_k) &= (\mathbf{T}_n \ldots \mathbf{T}_{k+1})\mathbf{T}_k(\mu_k,\nu_k)(\mathbf{T}_{k-1} \ldots \mathbf{T}_0) \mathbb{F}_{0},
\end{align}
where
\begin{align*}
	\mathbf{T}_k (\mu_k,\nu_k) & = \begin{bmatrix}
		z-c_k-\mu_k & -\nu_k \lambda_k (z-a_k) \\
		1 & 0	
	\end{bmatrix}.
\end{align*}
Further, $\mathbf{T}_k (\mu_k,\nu_k)$ can be written as
\begin{align}\label{T_k + N_k}
	\mathbf{T}_k (\mu_k,\nu_k)=\mathbf{T}_k+\mathbf{N}_k, \quad \mathbf{N}_k = \begin{bmatrix}
		-\mu_k & -(\nu_k-1) \lambda_k (z-a_k) \\
		0 & 0	
	\end{bmatrix}.
\end{align}
The associated polynomials of order $k+1$, $\mathcal{P}^{(k+1)}_{n-k}(x)$, satisfy 
\begin{align*}
\mathcal{P}^{(k+1)}_{n+1}(z) = (z-c_{n+k+1})\mathcal{P}^{(k+1)}_n(x)-\lambda_{n+k+1}(z-a_{n+k+1}) \mathcal{P}^{(k+1)}_{n-1}(z), \quad n\geq 0,
\end{align*}
with initial conditions $\mathcal{P}^{(k+1)}_{-1}(z)=0$ and $\mathcal{P}^{(k+1)}_{0}(z)=1$. Hence, by Favard's theorem \cite{Esmail masson JAT 1995}, there exists a moment functional with respect to which $\mathcal{P}^{(k+1)}_{n}(z)$ is also a sequence of $R_I$ polynomials. According to the theory of linear difference equations, the two sequences $\{u_n\}_{n \geq 0}$ and $\{v_n\}_{n \geq 0}$ are said to be linearly independent if the Casorati determinant
\begin{align}\label{casarotti determinant}
	D(u_n,v_n)=\begin{array}{|cc|}
		u_n & v_n \\
		{u}_{n+1} & {v}_{n+1}
	\end{array}
\end{align}
is different from zero for every $n$ \cite{milne 1951}. Since $\mathcal{P}^{(k+1)}_{n-k}(z)$ is a solution of the recurrence relation \eqref{special R1} with initial conditions $\mathcal{P}^{(k+1)}_{-1}(z)=0$ and $\mathcal{P}^{(k+1)}_{0}(z)=1$, it is easy to verify that 
\begin{align*}
	\begin{bmatrix}
		\mathcal{P}_{n+1}(z)	& \mathcal{P}^{(k+1)}_{n-k}(z)\\
		\mathcal{P}_{n}(z)	& \mathcal{P}^{(k+1)}_{n-k-1}(z)
	\end{bmatrix} &=\mathbf{T}_n \begin{bmatrix}
		\mathcal{P}_{n}(z)	& \mathcal{P}^{(k+1)}_{n-k-1}(z)\\
		\mathcal{P}_{n-1}(z)	& \mathcal{P}^{(k+1)}_{n-k-2}(z)
	\end{bmatrix}.
\end{align*}
%Hence,
%\begin{align*}
%	D(\mathcal{P}_{n}(x),\mathcal{P}^{(k)}_{n-k}(x))=\lambda_n xD(\mathcal{P}_{n-1}(x),\mathcal{P}^{(k)}_{n-k-1}(x)).
%\end{align*}
Let $X$ denote the set of zeros of $\mathcal{P}_{k}(z)$. From the above equality, we get
\begin{align}\label{D_Pn}
	D(\mathcal{P}_{n+1}(z),\mathcal{P}^{(k+1)}_{n-k}(z))&= \mathcal{P}_{k}(z)\prod_{j=k+1}^{n}\lambda_j(z-a_j),
\end{align}
which means that $\mathcal{P}_{n+1}(z)$ and $\mathcal{P}^{(k+1)}_{n-k}(z)$ are linearly independent in $\mathbb{C}\backslash X$.
\subsection{The spectral transformation} The Stieltjes or the Cauchy (also called the Borel \cite{Simon part1 2005} ) transform of the orthogonality measure $d\alpha$ is given by 
\begin{align}\label{m_alpha}
	m_\alpha(z)=\int_\Gamma \dfrac{d\alpha(y)}{z-y}, \quad z \in \mathbb{C}\backslash \Gamma .
\end{align}
An expression analogous to \eqref{m_alpha} for $R_{I}$ polynomials is given in \cite[eqn. 2.11, page 7]{Esmail masson JAT 1995}
\begin{align}\label{R1 integral}
	R_{I}(z)= \int_\Gamma \dfrac{d\alpha(y)}{z-y}, \quad z \in \mathbb{C}\backslash \Gamma,
\end{align}
where $\Gamma = \left(\displaystyle\bigcup_{j=1}^{N}\Gamma_j\right) \cup \{z_k\}_{k\geq 0}$. Here, $\{\Gamma_j\}_{j=1}^{N}$ denote the finite number of branch cuts and $\{z_k\}_{k\geq 0}$ be the number of poles of the function at which corresponding continued fraction converges. \par 
Also \cite[eqn 2.10]{Esmail masson JAT 1995} suggests that
\begin{equation}\label{R1_z continued fraction}
	\mathcal{R}_{I}(z) = \frac{1}{z-c_0} \mathbin{\genfrac{}{}{0pt}{}{}{-}} \frac{\lambda_1(z-a_1)}{z-c_1} \mathbin{\genfrac{}{}{0pt}{}{}{-}} \frac{\lambda_2(z-a_2)}{z-c_2} \mathbin{\genfrac{}{}{0pt}{}{}{-}} \mathbin{\genfrac{}{}{0pt}{}{}{\cdots}} 
	%&= \frac{1}{z-c_0-\dfrac{\lambda_1(z-a_1)}{z-c_1-\dfrac{\lambda_2(z-a_2)}{z-c_2- \ldots}}}
\end{equation}
and
%	&= \left[(x-c_0) - \frac{\lambda_1(x-a_1)}{x-c_1 -\mathbin{\genfrac{}{}{0pt}{}{}{\cdots}}} \right]^{-1} \\
\begin{equation}\label{R1_k+1_z continued fraction}
	\mathcal{R}_{I}^{k+1}(z) = \frac{1}{z-c_{k+1}} \mathbin{\genfrac{}{}{0pt}{}{}{-}} \frac{\lambda_{k+2}(z-a_{k+2})}{z-c_{k+2}} \mathbin{\genfrac{}{}{0pt}{}{}{-}} \frac{\lambda_{k+3}(z-a_{k+3})}{z-c_{k+3}} \mathbin{\genfrac{}{}{0pt}{}{}{-}} \mathbin{\genfrac{}{}{0pt}{}{}{\cdots}} 
	%&=\frac{1}{z-c_{k+1}-\dfrac{\lambda_{k+2}(z-a_{k+2})}{z-c_{k+2}-\dfrac{\lambda_{k+3}(z-a_{k+3})}{z-c_{k+3}- \ldots}}}
\end{equation}
%	&= \left[(x-c_{k+1}) - \frac{\lambda_{k+2}(x-a_{k+2})}{x-c_{k+2} -\mathbin{\genfrac{}{}{0pt}{}{}{\cdots}}} \right]^{-1} \\
are the representations of the continued fraction for $\mathcal{R}_{I}(z)$ and its $(k+1)$-th approximant, respectively. Integral transforms are widely used in the literature for studying various special functions, for some recent references in this regard, see \cite{Albayrak Dernek Ucar 2021, Bansal Nisar Singh 2020}.
%(continued fraction for $(k+1)$-th associated polynomial sequence). 

From \cite[chapter 4, equation 4.4]{Chihara book 1978}, we have
\begin{align}\label{chihara identity}
	\frac{A_{n+1}}{B_{n+1}} &= b_0+\frac{a_1}{b_1} \mathbin{\genfrac{}{}{0pt}{}{}{+}} \frac{a_2}{b_2} \mathbin{\genfrac{}{}{0pt}{}{}{+}} \mathbin{\genfrac{}{}{0pt}{}{}{\cdots}} \mathbin{\genfrac{}{}{0pt}{}{}{+}} \frac{a_{n+1}}{b_{n+1}} = \frac{b_{n+1}A_{n}+a_{n+1}A_{n-1}}{b_{n+1}B_{n}+a_{n+1}B_{n-1}}.
\end{align}  
The numerator polynomials of the corresponding continued fraction are $A_n$, and the denominator polynomials are $B_n$. Further, by the determinant formula, we obtain
\begin{align*}
\det\mathbb{F}_{n+1}= \begin{vmatrix}
		\mathcal{P}_{n+1}(z)	& -\mathcal{Q}_{n+1}(z)\\
		\mathcal{P}_{n}(z)	& -\mathcal{Q}_{n}(z)
	\end{vmatrix} = \prod_{j=1}^{n}\lambda_j (z-a_j),
\end{align*}
and hence, $\mathbb{F}_{n+1}$ is non-singular. In what follows, the product $\prod_{j=1}^{n}\lambda_j (z-a_j)$ will be denoted by the short-hand notation $\Lambda_n$.
These identities will be used to prove some of the results presented in this manuscript. \par
\begin{definition}\label{def homography mapping}
	The $\dot{=}$ notation used in \cite{Castillo chapter 2017} has been adapted for the homography mapping
	\begin{align*}
		r(z)=\dfrac{a(z)u(z)+b(z)}{c(z)u(z)+d(z)}, \quad a(z)d(z)-b(z)c(z) \neq 0.
	\end{align*}
	A pure rational spectral transformation is referred to as the transformation of a function $u(z)$ \cite{Castillo chapter 2017} given by
	\begin{align*}
		r(z) \dot{=} \mathbf{A}(z)u(z), \quad \mbox{where} \quad \mathbf{A}(z) = \begin{bmatrix}
			a(z)	&  b(z)\\
			c(z)	&  d(z)
		\end{bmatrix}.
	\end{align*} 
\end{definition}
%Throughout this section, we follow the analysis given in \cite{Paco perturbed recurrence 1990}. 
A spectral transformation changes the $m-$ function $m_\alpha$ \eqref{m_alpha} into a new $m-$ function $m_\beta$, associated with the measure $d\beta$, which is nothing more than a modification of the original measure $d\alpha$. We call the transformation of $m_\beta$ a pure rational spectral transformation when
\begin{align*}
	m_\beta \dot{=}  A(z) m_\alpha,
\end{align*}
where $a(z), b(z), c(z)$ and $d(z)$ are non-zero polynomials. \par
\subsection{Generalized co-modified $R_I$ polynomials}

\noindent
Let $\mathcal{R}_{I}(z;\mu_k,\nu_k)$ be the continued fraction related to the co-modified polynomial. Using \eqref{R1_k+1_z continued fraction}, $\mathcal{R}_{I}(z;\mu_k,\nu_k)$ can be written as
\begin{align}\label{R1_mu nu}
	\mathcal{R}_{I}(z;\mu_k,&\nu_k) = \frac{1}{z-c_0} \mathbin{\genfrac{}{}{0pt}{}{}{-}} \frac{\lambda_1(z-a_1)}{z-c_1} \mathbin{\genfrac{}{}{0pt}{}{}{-}}  \mathbin{\genfrac{}{}{0pt}{}{}{\cdots}} \mathbin{\genfrac{}{}{0pt}{}{}{-}} \frac{\nu_k \lambda_{k}(z-a_{k})}{z-c_{k}-\mu_k} \mathbin{\genfrac{}{}{0pt}{}{}{-}} \frac{\lambda_{k+1}(z-a_{k+1})}{z-c_{k+1}} \mathbin{\genfrac{}{}{0pt}{}{}{-}} \mathbin{\genfrac{}{}{0pt}{}{}{\cdots}} \nonumber \\
	%	&= \frac{1}{z-c_0-\dfrac{\lambda_1(z-a_1)}{z-c_1- \ldots-\dfrac{\nu_k \lambda_{k}(z-a_{k})}{z-c_{k}-\mu_k- \dfrac{\lambda_{k+1}(z-a_{k+1})}{z-c_{k+1}-\ldots}}}}\nonumber \\
	%	&= \frac{1}{z-c_0-\dfrac{\lambda_1(z-a_1)}{z-c_1- \ldots-\dfrac{\nu_k \lambda_{k}(z-a_{k})}{z-c_{k}-\mu_k -\lambda_{k+1}(z-a_{k+1})\mathcal{R}_{I}^{k+1}(z) }}} \nonumber\\
	&= \frac{1}{z-c_0} \mathbin{\genfrac{}{}{0pt}{}{}{-}} \frac{\lambda_1(z-a_1)}{z-c_1} \mathbin{\genfrac{}{}{0pt}{}{}{-}} \mathbin{\genfrac{}{}{0pt}{}{}{\cdots}} \mathbin{\genfrac{}{}{0pt}{}{}{-}} \frac{\nu_k \lambda_{k}(z-a_{k})}{z-c_{k}-\mu_k-\lambda_{k+1}(z-a_{k+1})\mathcal{R}_{I}^{k+1}(z)}.
\end{align}

\begin{theorem}\label{R1_mu_nu to R1 remark}
	$\mathcal{R}_{I}(z;\mu_k,\nu_k)$ defines a rational spectral transformation of $\mathcal{R}_{I}(z)$ as
	\begin{align*}
		\mathcal{R}_{I}(z;\mu_k,\nu_k) \dot{=} \mathbf{M}_k(z) \mathcal{R}_{I}(z),
	\end{align*}
	where $\mathbf{M}_k$ is the transfer matrix
	\begin{align*}	\mathbf{M}_k = \begin{bmatrix}
			\hat{\mathcal{S}}_k(z) \mathcal{P}_k(z) +\Lambda_k	 & -\mathcal{Q}_k(z)\hat{\mathcal{S}}_k(z)\\
			-\mathcal{S}_k(z)\mathcal{P}_k(z)	 & \Lambda_k+\mathcal{S}_k(z)\mathcal{Q}_k(z)
		\end{bmatrix},
	\end{align*}
	and where
	\begin{align*}
		\hat{\mathcal{S}}_k(z) &= -\mu_ k \mathcal{Q}_k(z)-(\nu_k-1)\lambda_k(z-a_k)\mathcal{Q}_{k-1}(z), \\
		\mathcal{S}_k(z) &=\mu_k \mathcal{P}_k(z)+(\nu_k-1)\lambda_k (z-a_k) \mathcal{P}_{k-1}(z).
	\end{align*}
\end{theorem}

\begin{proof}
First, we will find the relation between the continued fraction $\mathcal{R}_{I}(z;\mu_k,\nu_k)$ associated with the perturbations \eqref{co-recursive condition R1} and \eqref{co-dilated condition R1} and its $(k+1)$-th approximant $\mathcal{R}_{I}^{k+1}(z)$. The continued fraction expansion \eqref{R1_mu nu}, in comparison with \eqref{chihara identity}, gives
	\begin{align}
		&\mathcal{R}_{I}(z;\mu_k,\nu_k)  
		=\frac{(z-c_{k}-\mu_k-\lambda_{k+1}(z-a_{k+1})\mathcal{R}_{I}^{k+1}(z))\mathcal{Q}_{k}-\nu_k\lambda_{k}(z-a_{k})\mathcal{Q}_{k-1}}{(z-c_{k}-\mu_k-\lambda_{k+1}(z-a_{k+1})\mathcal{R}_{I}^{k+1}(z))\mathcal{P}_{k}-\nu_k \lambda_{k}(z-a_{k})\mathcal{P}_{k-1}} \nonumber\\
		%&= \frac{(z-c_{k})Q_{k}-\lambda_{k}(z-a_{k})Q_{k-1}-\mu_k Q_{k}-\lambda_{k+1}(z-a_{k+1})\mathcal{R}_{I}^{k+1}(z) Q_{k}-\nu_k \lambda_{k}(z-a_{k})Q_{k-1}+\lambda_{k}(z-a_{k})Q_{k-1}}{ (z-c_{k})P_{k}-\lambda_{k}(z-a_{k})P_{k-1}-\mu_k P_{k}-\lambda_{k+1}(z-a_{k+1})\mathcal{R}_{I}^{k+1}(z) P_{k}-\nu_k \lambda_{k}(z-a_{k})P_{k-1}+\lambda_{k}(z-a_{k})P_{k-1}} \\
		&= \frac{\mathcal{Q}_{k+1} -\lambda_{k+1}(z-a_{k+1})\mathcal{R}_{I}^{k+1}(z) \mathcal{Q}_{k}-\mu_k \mathcal{Q}_{k} +(1-\nu_k)\lambda_{k}(z-a_{k})\mathcal{Q}_{k-1}}{\mathcal{P}_{k+1} -\lambda_{k+1}(z-a_{k+1})\mathcal{R}_{I}^{k+1}(z) \mathcal{P}_{k} -\mu_k \mathcal{P}_{k}+(1-\nu_k)\lambda_{k}(z-a_{k})\mathcal{P}_{k-1}} \nonumber\\
		&= 	\frac{\mathcal{A}(z)\mathcal{R}_{I}^{k+1}(z)+\mathcal{B}(z)}{\mathcal{C}(z)\mathcal{R}_{I}^{k+1}(z)+\mathcal{D}(z)}, \label{R1_mu_nu to R1_k+1}
	\end{align}
	where
	\begin{align*}
		& \mathcal{A}(z) = \lambda_{k+1}(z-a_{k+1}) \mathcal{Q}_{k}, \quad \mathcal{B}(z) = -\mathcal{Q}_{k+1}+\mu_k \mathcal{Q}_{k} +(\nu_k-1)\lambda_{k}(z-a_{k})\mathcal{Q}_{k-1}, \nonumber \\
		& \mathcal{C}(z) = \lambda_{k+1}(z-a_{k+1}) \mathcal{P}_{k}, \quad \mathcal{D}(z) = -\mathcal{P}_{k+1}+\mu_k \mathcal{P}_{k} +(\nu_k-1)\lambda_{k}(z-a_{k})\mathcal{P}_{k-1}. \nonumber 
	\end{align*}
Now, looking at \Cref{def homography mapping}, it can be concluded that $\mathcal{R}_{I}(z;\mu_k,\nu_k)$ is the rational spectral transformation of its $(k+1)$-th approximant $\mathcal{R}_{I}^{k+1}(z)$. Further, putting $\mu_k = 0$ and $\nu_k = 1$ implies $\mathcal{R}_{I}(z)=\mathcal{R}_{I}(z;\mu_k,\nu_k)$, and formula \eqref{R1_mu_nu to R1_k+1} takes the form
\begin{align}
	&\mathcal{R}_{I}(z) = \frac{\mathcal{Q}_{k+1} -\lambda_{k+1}(z-a_{k+1})\mathcal{R}_{I}^{k+1}(z) \mathcal{Q}_{k}}{ \mathcal{P}_{k+1}  -\lambda_{k+1}(z-a_{k+1})\mathcal{R}_{I}^{k+1}(z) \mathcal{P}_{k}},\nonumber\\
	%		&	P_{k+1}\mathcal{R}_{I}(z)  -\lambda_{k+1}(z-a_{k+1})\mathcal{R}_{I}^{k+1}(z) P_{k}\mathcal{R}_{I}(z) = Q_{k+1} -\lambda_{k+1}(z-a_{k+1})\mathcal{R}_{I}^{k+1}(z)Q_{k} \\
	\Longrightarrow &\lambda_{k+1}(z-a_{k+1})\mathcal{R}_{I}^{k+1}(z) \mathcal{Q}_{k}-\lambda_{k+1}(z-a_{k+1})\mathcal{R}_{I}^{k+1}(z) \mathcal{P}_{k}\mathcal{R}_{I}(z) = \mathcal{Q}_{k+1}-\mathcal{P}_{k+1}\mathcal{R}_{I}(z), \nonumber\\
	\Longrightarrow &\lambda_{k+1}(z-a_{k+1})\mathcal{R}_{I}^{k+1}(z)[\mathcal{Q}_{k}-\mathcal{P}_{k}\mathcal{R}_{I}(z)] = \mathcal{Q}_{k+1}-\mathcal{P}_{k+1}\mathcal{R}_{I}(z), \nonumber\\
	\Longrightarrow &\lambda_{k+1}(z-a_{k+1})\mathcal{R}_{I}^{k+1}(z)= \frac{\mathcal{P}_{k+1}\mathcal{R}_{I}(z)-\mathcal{Q}_{k+1}}{\mathcal{P}_{k}\mathcal{R}_{I}(z)-\mathcal{Q}_{k}}. \label{R1_k+1 to R1_z}
\end{align}
Now, eliminating $\mathcal{R}_{I}^{k+1}(z)$ from \eqref{R1_mu_nu to R1_k+1} and \eqref{R1_k+1 to R1_z}, we obtain
	\begin{align*}
		&\mathcal{R}_{I}(z;\mu_k,\nu_k) = \frac{\mathcal{A}(z)\mathcal{R}_{I}^{k+1}(z)+\mathcal{B}(z)}{\mathcal{C}(z)\mathcal{R}_{I}^{k+1}(z)+\mathcal{D}(z)} \\
		&= \dfrac{\lambda_{k+1}(z-a_{k+1}) \mathcal{Q}_{k}\mathcal{R}_{I}^{k+1}(z)-\mathcal{Q}_{k+1}+\mu_k \mathcal{Q}_{k}+(\nu_k-1)\lambda_{k}(z-a_{k})\mathcal{Q}_{k-1} }{\lambda_{k+1}(z-a_{k+1}) \mathcal{P}_{k}\mathcal{R}_{I}^{k+1}(z)-\mathcal{P}_{k+1}+\mu_k \mathcal{P}_{k} +(\nu_k-1)\lambda_{k}(z-a_{k})\mathcal{P}_{k-1}} \\
		%&= \dfrac{\lambda_{k+1}(z-a_{k+1}) Q_{k}\left[\dfrac{1}{\lambda_{k+1}(z-a_{k+1})}\dfrac{P_{k+1}\mathcal{R}_{I}(z)-Q_{k+1}}{P_{k}\mathcal{R}_{I}(z)-Q_{k}}\right]-Q_{k+1}+\mu_k Q_{k} +(\nu_k-1)\lambda_{k}(z-a_{k})Q_{k-1}}{\lambda_{k+1}(z-a_{k+1}) P_{k}\left[\dfrac{1}{\lambda_{k+1}(z-a_{k+1})}\dfrac{P_{k+1}\mathcal{R}_{I}(z)-Q_{k+1}}{P_{k}\mathcal{R}_{I}(z)-Q_{k}}\right]-P_{k+1}+\mu_k P_{k} +(\nu_k-1)\lambda_{k}(z-a_{k})P_{k-1}} \\
		&= \frac{\mathcal{Q}_{k}\mathcal{P}_{k+1}\mathcal{R}_{I}(z)-\mathcal{Q}_{k}\mathcal{Q}_{k+1}-\mathcal{Q}_{k+1}[\mathcal{P}_{k}\mathcal{R}_{I}(z)-\mathcal{Q}_{k}]+\mu_k \mathcal{Q}_{k}[\mathcal{P}_{k}\mathcal{R}_{I}(z)-\mathcal{Q}_{k}] +}
		{\mathcal{P}_{k}\mathcal{P}_{k+1}\mathcal{R}_{I}(z)-\mathcal{Q}_{k+1}\mathcal{P}_{k}-\mathcal{P}_{k+1}[\mathcal{P}_{k}\mathcal{R}_{I}(z)-\mathcal{Q}_{k}]+\mu_k \mathcal{P}_{k}[\mathcal{P}_{k}\mathcal{R}_{I}(z)-\mathcal{Q}_{k}] +} \\
		& \hspace{8cm}\dfrac{+(\nu_k-1)\lambda_{k}(z-a_{k})\mathcal{Q}_{k-1}[\mathcal{P}_{k}\mathcal{R}_{I}(z)-\mathcal{Q}_{k}]}{+(\nu_k-1) \lambda_{k}(z-a_{k}) \mathcal{P}_{k-1}[\mathcal{P}_{k}\mathcal{R}_{I}(z)-\mathcal{Q}_{k}]} \\
		%&= \frac{[Q_{k}P_{k+1}-Q_{k+1}P_{k}+\mu_k Q_{k}P_{k}+ (\nu_k-1)\lambda_{k}(z-a_{k})Q_{k-1}P_{k}]\mathcal{R}_{I}(z)-\mu_k[Q_{k}]^2
		%	-(\nu_k-1)\lambda_{k}(z-a_{k})Q_{k-1}Q_k}
		%{[\mu_k[P_{k}]^2+(\nu_k-1)\lambda_{k}(z-a_{k})P_{k-1}P_{k}] \mathcal{R}_{I}(z) + P_{k+1}Q_{k}-Q_{k+1}P_{k}- \mu_k P_{k}Q_{k}-(\nu_k-1)\lambda_{k}(z-a_{k})P_{k-1}Q_{k} } \\
		&= \frac{[-\Lambda_k+\mu_k \mathcal{Q}_{k}\mathcal{P}_{k} +(\nu_k-1)\lambda_{k}(z-a_{k})\mathcal{Q}_{k-1}\mathcal{P}_{k}]\mathcal{R}_{I}(z)-\mu_k[\mathcal{Q}_{k}]^2-}
		{[\mu_k[\mathcal{P}_{k}]^2+(\nu_k-1)\lambda_{k}(z-a_{k})\mathcal{P}_{k-1}\mathcal{P}_{k}] \mathcal{R}_{I}(z) - \Lambda_k-\mu_k \mathcal{Q}_{k}\mathcal{P}_{k} - } \\
		& \hspace{9cm} \dfrac{-(\nu_k-1)\lambda_{k}(z-a_{k})\mathcal{Q}_{k-1}\mathcal{Q}_k}{-(\nu_k-1)\lambda_{k}(z-a_{k})\mathcal{P}_{k-1}\mathcal{Q}_{k}} \\
		&=\dfrac{(\mathbf{M}_k)_{11}\mathcal{R}_{I}(z)+(\mathbf{M}_k)_{12}}{(\mathbf{M}_k)_{21}\mathcal{R}_{I}(z)+ (\mathbf{M}_k)_{22}}, 
	\end{align*}
	where
	\begin{align*}
		&(\mathbf{M}_k)_{11} = \Lambda_k- (\nu_k-1)\lambda_{k}(z-a_{k})\mathcal{Q}_{k-1}\mathcal{P}_{k}-\mu_k \mathcal{Q}_{k}\mathcal{P}_{k},  \\
		&(\mathbf{M}_k)_{12}=(\nu_k-1)\lambda_{k}(z-a_{k})\mathcal{Q}_{k-1}\mathcal{Q}_{k}+\mu_k[\mathcal{Q}_{k}]^2, \\
		&(\mathbf{M}_k)_{21}= -(\nu_k-1)\lambda_{k}(z-a_{k})\mathcal{P}_{k-1}\mathcal{P}_{k}-\mu_k[\mathcal{P}_{k}]^2,\\
		& (\mathbf{M}_k)_{22}= \Lambda_k+(\nu_k-1)\lambda_{k}(z-a_{k})\mathcal{P}_{k-1}\mathcal{Q}_{k}+\mu_k \mathcal{Q}_{k}\mathcal{P}_{k}. 
	\end{align*} 
Now, in the view of \Cref{def homography mapping}, we obtain the required result. \qedhere 
\end{proof}

The following specific cases of generalized co-recursive and generalized co-dilated $R_I$ polynomials are immediate consequence of results proved earlier in this section.

\begin{remark}
	Let $\mathcal{R}_{I}(z;\mu_k)$ be the continued fraction corresponding to co-recursive perturbation \eqref{co-recursive condition R1}. With $\nu_k = 1$, relation \eqref{R1_mu_nu to R1_k+1} reduces to give a relation between $\mathcal{R}_{I}(z;\mu_k)$ and $\mathcal{R}_{I}^{k+1}(z)$.
	\begin{align*}
		\mathcal{R}_{I}(z;\mu_k) &\dot{=} \begin{bmatrix}
			\lambda_{k+1}(z-a_{k+1}) \mathcal{Q}_{k}	&  -\mathcal{Q}_{k+1}+\mu_k \mathcal{Q}_{k}\\
			\lambda_{k+1}(z-a_{k+1}) \mathcal{P}_{k} & -\mathcal{P}_{k+1}+\mu_k \mathcal{Q}_{k}
		\end{bmatrix} \mathcal{R}_{I}^{k+1}(z).
		%\mathcal{R}_{I}(z;\mu_k) &= \frac{\mathcal{A}(z)\mathcal{R}_{I}^{k+1}(z)+\mathcal{B}(z)}{\mathcal{C}(z)\mathcal{R}_{I}^{k+1}(z)+\mathcal{D}(z)}, \\
		%\label{R1_mu to R1_k+1 formula}
		%\text{where} \quad	 \mathcal{A}(z) &= \lambda_{k+1}(z-a_{k+1}) \mathcal{Q}_{k}, \qquad \mathcal{B}(z) = -\mathcal{Q}_{k+1} +\mu_k \mathcal{Q}_{k},  \\
		%\mathcal{C}(z) &= \lambda_{k+1}(z-a_{k+1}) \mathcal{P}_{k}, \qquad \mathcal{D}(z) = -\mathcal{P}_{k+1} +\mu_k \mathcal{P}_{k}. 
	\end{align*}
\end{remark}

%\begin{proof}
%	Using \eqref{s_mu}, \eqref{chihara identity} and some elementary techniques of continued fraction, we have the result.
%	\begin{align*}
%	\mathcal{R}_{I}(z;\mu_k)  
%	&=\frac{(z-c_{k}-\mu_k-\lambda_{k+1}(z-a_{k+1})\mathcal{R}_{I}^{k+1}(z))Q_{k}-\lambda_{k}(z-a_{k})Q_{k-1}}{(z-c_{k}-\mu_k-\lambda_{k+1}(z-a_{k+1})\mathcal{R}_{I}^{k+1}(z))P_{k}-\lambda_{k}(z-a_{k})P_{k-1}} \nonumber\\
%	&= \frac{(z-c_{k})Q_{k}-\lambda_{k}(z-a_{k})Q_{k-1} -\mu_k Q_{k}-\lambda_{k+1}(z-a_{k+1})\mathcal{R}_{I}^{k+1}(z) Q_{k}}{ (z-c_{k})P_{k}-\lambda_{k}(z-a_{k})P_{k-1} -\mu_k P_{k}-\lambda_{k+1}(z-a_{k+1})\mathcal{R}_{I}^{k+1}(z) P_{k}} \nonumber\\
%	&= \frac{Q_{k+1} -\mu_k Q_{k}-\lambda_{k+1}(z-a_{k+1})\mathcal{R}_{I}^{k+1}(z) Q_{k}}{ P_{k+1} -\mu_k P_{k}-\lambda_{k+1}(z-a_{k+1})\mathcal{R}_{I}^{k+1}(z) P_{k}} \nonumber\\
%	&= 	\frac{A(z)\mathcal{R}_{I}^{k+1}(z)+B(z)}{C(z)\mathcal{R}_{I}^{k+1}(z)+D(z)} \nonumber	
%	\end{align*}
%\end{proof}

\begin{remark}
	A relation between continued fraction associated with co-recursion \eqref{co-recursive condition R1} and its $(k+1)$-th approximant can be given in the following way:
	\begin{align*}
		\mathcal{R}_{I}(z;\mu_k) &\dot{=} \begin{bmatrix}
			\Lambda_k+\mu_k \mathcal{Q}_{k}\mathcal{P}_{k}	&  -\mu_k \mathcal{Q}_{k}^2\\
			\mu_k \mathcal{P}_{k}^2 & \Lambda_k-\mu_k \mathcal{P}_{k}\mathcal{Q}_{k}
		\end{bmatrix} \mathcal{R}_{I}(z).
		%\mathcal{R}_{I}(z;\mu_k) &= \dfrac{\mathcal{A}_\mu(z)\mathcal{R}_{I}(z)+\mathcal{B}_\mu(z)}{\mathcal{C}_\mu(z)\mathcal{R}_{I}(z)+\mathcal{D}_\mu(z)}, &\\
		%\mbox{where} \quad	\mathcal{A}_\mu(z) = \displaystyle\prod_{j=1}^{k}\lambda_j (z-a_j)+\mu_k \mathcal{Q}_{k}\mathcal{P}_{k}, &\qquad \mathcal{B}_\mu(z)=-\mu_k [\mathcal{Q}_{k}]^2, \\
		%\mathcal{D}_\mu(z)= \displaystyle\prod_{j=1}^{k}\lambda_j (z-a_j)-\mu_k \mathcal{P}_{k}\mathcal{Q}_{k}, &\qquad \mathcal{C}_\mu(z)= \mu_k[ \mathcal{P}_{k}]^2.
	\end{align*}
\end{remark}
\begin{remark}
	Let $\mathcal{R}_{I}(z;\nu_k)$ be the continued fraction associated with co-dilation \eqref{co-dilated condition R1}. With $\mu_k = 0$, equation \eqref{R1_mu_nu to R1_k+1} reduces to give a relation between $\mathcal{R}_{I}(z;\nu_k)$ and its $(k+1)$-th approximant.
	\begin{align*}
		\mathcal{R}_{I}(z;\nu_k) &\dot{=} \begin{bmatrix}
			\lambda_{k+1}(z-a_{k+1}) \mathcal{Q}_{k}	&  -\mathcal{Q}_{k+1}+(\nu_k-1)\lambda_{k}(z-a_{k})\mathcal{Q}_{k-1}\\
			\lambda_{k+1}(z-a_{k+1}) \mathcal{P}_{k} & -\mathcal{P}_{k+1} +(\nu_k-1)\lambda_{k}(z-a_{k})\mathcal{P}_{k-1}
		\end{bmatrix} \mathcal{R}_{I}^{k+1}(z).
		%&\mathcal{R}_{I}(z;\nu_k)=\frac{\mathcal{A}(z)\mathcal{R}_{I}^{k+1}(z)+\mathcal{B}(z)}{\mathcal{C}(z)\mathcal{R}_{I}^{k+1}(z)+\mathcal{D}(z)}, \\
		%	\text{where} \quad	& \mathcal{A}(z) = \lambda_{k+1}(z-a_{k+1}) \mathcal{Q}_{k}, \qquad \mathcal{B}(z) = -\mathcal{Q}_{k+1} +(\nu_k-1)\lambda_{k}(z-a_{k})\mathcal{Q}_{k-1}, \nonumber\\
		%	& \mathcal{C}(z) = \lambda_{k+1}(z-a_{k+1}) \mathcal{P}_{k}, \qquad \mathcal{D}(z) = -\mathcal{P}_{k+1} +(\nu_k-1)\lambda_{k}(z-a_{k})\mathcal{P}_{k-1}. \nonumber
	\end{align*}
\end{remark}
%\begin{proof}
%Equation \eqref{S_nu} with \eqref{chihara identity} gives
%\begin{align*}
%	\mathcal{R}_{I}(z;\nu_k)
%	&=\frac{(z-c_{k}-\lambda_{k+1}(z-a_{k+1})\mathcal{R}_{I}^{k+1}(z))Q_{k}-\nu_k\lambda_{k}(z-a_{k})Q_{k-1}}{(z-c_{k}-\lambda_{k+1}(z-a_{k+1})\mathcal{R}_{I}^{k+1}(z))P_{k}-\nu_k \lambda_{k}(z-a_{k})P_{k-1}} \\
%	&= \frac{(z-c_{k})Q_{k}-\lambda_{k}(z-a_{k})Q_{k-1}-\lambda_{k+1}(z-a_{k+1})\mathcal{R}_{I}^{k+1}(z) Q_{k}-\nu_k \lambda_{k}(z-a_{k})Q_{k-1}+\lambda_{k}(z-a_{k})Q_{k-1}}{ (z-c_{k})P_{k}-\lambda_{k}(z-a_{k})P_{k-1}-\lambda_{k+1}(z-a_{k+1})\mathcal{R}_{I}^{k+1}(z) P_{k}-\nu_k \lambda_{k}(z-a_{k})P_{k-1}+\lambda_{k}(z-a_{k})P_{k-1}} \\
%	&= \frac{Q_{k+1} -\lambda_{k+1}(z-a_{k+1})\mathcal{R}_{I}^{k+1}(z) Q_{k}+(1-\nu_k)\lambda_{k}(z-a_{k})Q_{k-1}}{P_{k+1} -\lambda_{k+1}(z-a_{k+1})\mathcal{R}_{I}^{k+1}(z) P_{k}+(1-\nu_k)\lambda_{k}(z-a_{k})P_{k-1}} \\
%	&= 	\frac{A(z)\mathcal{R}_{I}^{k+1}(z)+B(z)}{C(z)\mathcal{R}_{I}^{k+1}(z)+D(z)} 
%\end{align*}	
%\end{proof}
%A result similar to Theorem \ref{R1_mu to R1_z} can be proved for co-dilated case as well in the following way:
\begin{remark}
	A relation between continued fraction associated with co-dilation \eqref{co-dilated condition R1} and its $(k+1)$-th approximant can be given in the following way:
	\begin{align*}
		\mathcal{R}_{I}(z;\nu_k) \dot{=}\begin{bmatrix}
			\Lambda_k+(\nu_k-1)\lambda_{k}(z-a_{k})\mathcal{Q}_{k-1}\mathcal{P}_{k}	&  -(\nu_k-1)\lambda_{k}(z-a_{k})\mathcal{Q}_{k-1}\mathcal{Q}_{k}\\
			(\nu_k-1)\lambda_{k}(z-a_{k})\mathcal{P}_{k-1}\mathcal{P}_{k} & \Lambda_k-(\nu_k-1)\lambda_{k}(z-a_{k})\mathcal{P}_{k-1}\mathcal{Q}_{k}
		\end{bmatrix} \mathcal{R}_{I}(z).
		%&\mathcal{R}_{I}(z;\nu_k) = \dfrac{\mathcal{A}_\mu(z)\mathcal{R}_{I}(z)+\mathcal{B}_\mu(z)}{\mathcal{C}_\mu(z)\mathcal{R}_{I}(z)+\mathcal{D}_\mu(z)}, \\
		%\text{where} \qquad	&\mathcal{A}_\nu(z) = \displaystyle\prod_{j=1}^{k}\lambda_j (z-a_j)+ (\nu_k-1)\lambda_{k}(z-a_{k})\mathcal{Q}_{k-1}\mathcal{P}_{k},  \\
		%&	\mathcal{B}_\nu(z)=-(\nu_k-1)\lambda_{k}(z-a_{k})\mathcal{Q}_{k-1}\mathcal{Q}_{k}, \\	&\mathcal{D}_\nu= \displaystyle\prod_{j=1}^{k}\lambda_j (z-a_j)-(\nu_k-1)\lambda_{k}(z-a_{k})\mathcal{P}_{k-1}\mathcal{Q}_{k}, \\ 
		%&   \mathcal{C}_\nu(z)= (\nu_k-1)\lambda_{k}(z-a_{k})\mathcal{P}_{k-1}\mathcal{P}_{k}.
	\end{align*}
\end{remark}
For additional details regarding rational spectral transformations, see \cite{Zhedanov Rational spectral 1997}. \par 
%\begin{remark}
%Following expression gives a relation between $\mathcal{R}_{I}(z;\mu_k,\nu_k)$ and $\mathcal{R}_{I}(z)$.
%\begin{align*}
%&	\mathcal{R}_{I}(z;\mu_k,\nu_k) = \dfrac{\mathcal{A}_{\mu,\nu}(z)\mathcal{R}_{I}(z)+\mathcal{B}_{\mu,\nu}(z)}{\mathcal{C}_{\mu,\nu}(z)\mathcal{R}_{I}(z)+\mathcal{D}_{\mu,\nu}(z)},\\
%\end{align*}
%\end{remark}
The next important aspect is the computation of perturbed polynomials from the original ones. This can be achieved in two ways: 
\begin{enumerate}
	\item The transfer matrix method
	\item The classical method.
\end{enumerate}
\subsection{The transfer matrix method}
The following result is important both from a computational and theoretical standpoint because $\mathbf{M}_k$ being a matrix, it is easy to compute the perturbed polynomials $\mathcal{P}_{n+1}(z;\mu_k,\nu_k)$ and $\mathcal{Q}_{n+1}(z;\mu_k,\nu_k)$.

\begin{theorem}\label{transfer matrix theorem}
	The following relation holds in $\mathbb{C}$:
	\begin{align*}
		\Lambda_k \begin{bmatrix}
			\mathcal{P}_{n+1}(z;\mu_k,\nu_{k}) & \mathcal{P}_{n}(z;\mu_k,\nu_{k})	\\
			-\mathcal{Q}_{n+1}(z;\mu_k,\nu_{k}) & -\mathcal{Q}_{n}(z;\mu_k,\nu_{k})
		\end{bmatrix} & = ~ cof(\mathbf{M}_k)(z) \begin{bmatrix}
		\mathcal{P}_{n+1}(z) &	\mathcal{P}_{n}(z)\\
		-\mathcal{Q}_{n+1}(z) & -\mathcal{Q}_{n}(z)
	\end{bmatrix},
	\end{align*}
where $cof(\cdot)$ is the cofactor matrix operator.
\end{theorem}
\begin{proof}
Let $\mathbb{F}_{n+1}(\mu_k,\nu_k)$ be the polynomial matrix corresponding to co-modified polynomials of $R_{I}$ type. From \eqref{F_n+1 to T_0 R1 pp}, we get
\begin{align*}
\mathbb{F}_{n+1}(\mu_k,\nu_k) &= \mathbf{T}_n \ldots \mathbf{T}_{k+1}\mathbf{T}_k (\mu_k,\nu_k) \mathbf{T}_{k-1} \ldots \mathbf{T}_0 \nonumber   \\
&=\mathbf{T}_n \ldots \mathbf{T}_{k+1}\mathbf{T}_k \mathbf{T}_{k-1} \ldots \mathbf{T}_0 (\mathbb{F}_{k+1} )^{-1}\mathbf{T}_k (\mu_k,\nu_k)\mathbb{F}_{k} \nonumber \\
&= \mathbb{F}_{n+1}(\mathbb{F}_{k+1} )^{-1} \mathbb{F}_{k+1}(\mu_k,\nu_k).
\end{align*}
This implies
\begin{align}
	\mathbb{F}^T_{n+1}(\mu_k,\nu_k) &=[\mathbb{F}_{k+1}(\mu_k,\nu_k)]^T (\mathbb{F}_{k+1} )^{-T} \mathbb{F}^T_{n+1} \label{F_k+1 transpose 1 R1 pp}.
\end{align}
Since
\begin{align}\label{F_k+1 transpose 2 R1 pp}
	[\mathbb{F}_{k+1}(\mu_k,\nu_k)]^T= \begin{bmatrix}
		\mathcal{P}_{k+1}(z;\mu_k,\nu_k)	& \mathcal{P}_{k}(z)\\
		-\mathcal{Q}_{k+1}(z;\mu_k,\nu_k)	& -\mathcal{Q}_{k}(z)
	\end{bmatrix} 
\end{align}
and by the determinant formula, $\det(\mathbb{F}_{k+1}) = \Lambda_k$,
\begin{align}
	(\mathbb{F}_{k+1})^{-T} &= \frac{1}{\Lambda_k} \begin{bmatrix}
		-\mathcal{Q}_{k}(z)	& -\mathcal{P}_{k}(z)\\
		\mathcal{Q}_{k+1}(z)	& \mathcal{P}_{k+1}(z)
	\end{bmatrix}. \label{T_k F_k tranpose inverse}
\end{align}
Using \eqref{F_k+1 transpose 2 R1 pp} and \eqref{T_k F_k tranpose inverse} in \eqref{F_k+1 transpose 1 R1 pp}, we obtain\\ 
$ \displaystyle
[\mathbb{F}_{k+1}(\mu_k,\nu_k)]^T (\mathbb{F}_{k+1} )^{-T}
$
\begin{align}\label{product matrix R2 pp}
&=\frac{1}{\Lambda_k}\begin{bmatrix}
	-\mathcal{P}_{k+1}(z;\mu_k,\nu_k) \mathcal{Q}_{k}(z)+\mathcal{P}_{k}(z)\mathcal{Q}_{k+1}(z)	& -\mathcal{P}_{k+1}(z;\mu_k,\nu_k)\mathcal{P}_{k}(z)+\mathcal{P}_{k}(z)\mathcal{P}_{k+1}(z) \\
	\mathcal{Q}_{k+1}(z;\mu_k,\nu_k)\mathcal{Q}_{k}(z)-\mathcal{Q}_{k}(z)\mathcal{Q}_{k+1}(z)	& \mathcal{Q}_{k+1}(z;\mu_k,\nu_k)\mathcal{P}_{k}(z)-\mathcal{Q}_{k}(z)\mathcal{P}_{k+1}(z)
\end{bmatrix} \nonumber\\
&= \frac{1}{\Lambda_k} \begin{bmatrix}
	\Lambda_k +\mathcal{S}_k(z)\mathcal{Q}_k(z) & \mathcal{S}_k(z)\mathcal{P}_k(z)	\\
	\mathcal{Q}_k(z)\hat{\mathcal{S}}_k(z) & \hat{\mathcal{S}}_k(z) \mathcal{P}_k(z) +\Lambda_k
\end{bmatrix} \nonumber\\
&=\frac{1}{\Lambda_k} \begin{bmatrix}
	(\mathbf{M}_k)_{22}	& -(\mathbf{M}_k)_{21}\\
	-(\mathbf{M}_k)_{12}	& (\mathbf{M}_k)_{11} 
\end{bmatrix}= \frac{cof(\mathbf{M}_k)(z)}{\Lambda_k}
\end{align}
The theorem follows after substituting \eqref{product matrix R2 pp} in \eqref{F_k+1 transpose 1 R1 pp}.
%\begin{align*}
%\dfrac{\mathcal{Q}_n(z;\mu_k,\nu_k)}{\mathcal{P}_n(z;\mu_k,\nu_k)}&= \dfrac{(\mathbf{M}_k)_{11}\dfrac{\mathcal{Q}_n(z)}{\mathcal{P}_n(z)}+(\mathbf{M}_k)_{12}}{(\mathbf{M}_k)_{21}\dfrac{\mathcal{Q}_n(z)}{\mathcal{P}_n(z)}+(\mathbf{M}_k)_{22}}.  
%\end{align*}
%Using \eqref{R1 continued fraction} and \Cref{def homography mapping}, the result follows.
%A clever observation between \Cref{transfer matrix theorem} and \Cref{R1_mu_nu to R1 remark} implies the result.
\end{proof}

The former theorem enables the study of the polynomials when finite number of recurrence coefficients are perturbed. In this regard, the following can be proved using techniques given in \cite{Castillo perturbed szego 2014, swami vinay R2 2022}.

\begin{theorem}
Let $k,m$ be two fixed non-negative integers with $m < k$. Then for $n > m$, the following relation holds:
\begin{align*}
\prod_{j=m}^{k}\Lambda_j \mathbb{F}_{n+1}(\mu_m,\nu_m,\ldots \mu_k,\nu_k)  & = \prod_{j=m}^{k}cof(\mathbf{M}_j)(z) \mathbb{F}_{n+1}.
\end{align*}
\end{theorem}

%\begin{theorem}
%	Let $\mathcal{R}_{I}(z;\mu_k,\nu_k)$  be the continued fraction associated with the perturbations \eqref{co-recursive condition R1} and \eqref{co-dilated condition R1}. Then $\mathcal{R}_{I}(z;\mu_k,\nu_k)$ is a pure rational spectral transformation of $\mathcal{R}_{I}(z)$ given by
%	\begin{align*}
%		\mathcal{R}_{I}(z;\mu_k,\nu_k) \dot{=} cof(\mathbf{M}_k)\mathcal{R}_{I}(z),
%	\end{align*}
%	where cof(.) is the cofactor matrix operator.
%\end{theorem}

%\begin{corollary}
%	$\mathcal{R}_{I}(z;\mu_m,\nu_m,\ldots \mu_k,\nu_k)$ is a pure rational spectral transformation of $\mathcal{R}_{I}(z)$ given by
%	\begin{align*}
%		\mathcal{R}_{I}(z;\mu_m,\nu_m,\ldots \mu_k,\nu_k) = cof \left(\prod_{j=m}^{k}\mathbf{M}_j \right)\mathcal{R}_{I}(z).
%	\end{align*}
%\end{corollary}
\subsection{The classical method}
A disadvantage of \Cref{Theorem s_k_x R1 pp} is that it requires the knowledge of associated polynomials, $\mathcal{P}^{(k+1)}_{n-k}(z)$ and $\mathcal{Q}^{(k+1)}_{n-k}(z)$, while \Cref{transfer matrix theorem} utilizes already given polynomials $\mathcal{P}_{n}(z)$ and $\mathcal{Q}_{n}(z)$ to compute the perturbed polynomials. 
\begin{theorem}\label{Theorem s_k_x R1 pp}
	The following formulas hold:
	\begin{align*}
		\mathcal{P}_{n+1}(z;\mu_k,\nu_k)&=\mathcal{P}_{n+1}(z), \qquad \mbox{$n < k$}, \\
		{\mathcal{P}}_{n+1}(z;\mu_k,\nu_k) &= \mathcal{P}_{n+1}(z) - \mathcal{S}_k(z)\mathcal{P}^{(k+1)}_{n-k}(z), \qquad \mbox{$n \geq k$}, \\
		\text{and} \quad \mathcal{Q}_{n+1}(z;\mu_k,\nu_k)&=\mathcal{Q}_{n+1}(z), \qquad \mbox{$n < k$}, \\
		{\mathcal{Q}}_{n+1}(z;\mu_k,\nu_k) &= \mathcal{Q}_{n+1}(z) + \hat{\mathcal{S}}_k(z)\mathcal{Q}^{(k+1)}_{n-k}(z), \qquad \mbox{$n \geq k$},
	\end{align*}
%	where $\mathcal{S}_k(x)=\mu_k \mathcal{P}_k(x)+(\nu_k-1)\lambda_k x \mathcal{P}_{k-1}(x)$ and $\hat{\mathcal{S}}_k(x)=-\mu_k \mathcal{Q}_k(x)-(\nu_k-1)\lambda_k x \mathcal{Q}_{k-1}(x)$.
\end{theorem}
\begin{proof}
	Any solution to \eqref{Generalised comodified equations} will be a linear combination of two linearly independent solutions, as is clear from the theory of difference equations. From \eqref{P_n+1 mu_k nu_k R1 pp},
	\begin{align*}
		\begin{bmatrix}
			\mathcal{P}_{n+1}(z;\mu_k,\nu_k) & -\mathcal{Q}_{n+1}(z;\mu_k,\nu_k)\\
			{\mathcal{P}}_{n}(z;\mu_k,\nu_k) & -\mathcal{Q}_{n}(z;\mu_k,\nu_k)
		\end{bmatrix}&= (\mathbf{T}_n \ldots \mathbf{T}_{k+1})\mathbf{T}_k(\mu_k,\nu_k) \begin{bmatrix}
			\mathcal{P}_{k}(z) & -\mathcal{Q}_{k}(z)\\
			{\mathcal{P}}_{k-1}(z) & -\mathcal{Q}_{k-1}(z)
		\end{bmatrix}.
	\end{align*}
	Use of \eqref{T_k + N_k} implies
	\begin{align*}
		&\begin{bmatrix}
			\mathcal{P}_{n+1}(z;\mu_k,\nu_k) & -\mathcal{Q}_{n+1}(z;\mu_k,\nu_k)\\
			{\mathcal{P}}_{n}(z;\mu_k,\nu_k) & -\mathcal{Q}_{n}(z;\mu_k,\nu_k)
		\end{bmatrix}= (\mathbf{T}_n \ldots \mathbf{T}_{k+1})(\mathbf{T}_k+\mathbf{N}_k) \begin{bmatrix}
			\mathcal{P}_{k}(z) & -\mathcal{Q}_{k}(z)\\
			{\mathcal{P}}_{k-1}(z) & -\mathcal{Q}_{k-1}(z)
		\end{bmatrix} \\
		 =& \begin{bmatrix}
			\mathcal{P}_{n+1}(z) & -\mathcal{Q}_{n+1}(z)\\
			{\mathcal{P}}_{n}(z) & -\mathcal{Q}_{n}(z)
		\end{bmatrix}+ \mathbf{T}_n \ldots \mathbf{T}_{k+1} \begin{bmatrix}
			-\mu_k & -(\nu_k-1) \lambda_k (z-a_k) \\
			0 & 0	
		\end{bmatrix} \begin{bmatrix}
			\mathcal{P}_{k}(z) & -\mathcal{Q}_{k}(z)\\
			{\mathcal{P}}_{k-1}(z) & -\mathcal{Q}_{k-1}(z)
		\end{bmatrix}\\
	=& \begin{bmatrix}
		\mathcal{P}_{n+1}(z) & -\mathcal{Q}_{n+1}(z)\\
		{\mathcal{P}}_{n}(z) & -\mathcal{Q}_{n}(z)
	\end{bmatrix}+ \mathbf{T}_n \ldots \mathbf{T}_{k+1} \begin{bmatrix}
		-\mathcal{S}_k(z) & -\hat{\mathcal{S}}_k(z)\\
		0 & 0
	\end{bmatrix}\\
		&=\begin{bmatrix}
			\mathcal{P}_{n+1}(z) & -\mathcal{Q}_{n+1}(z)\\
			{\mathcal{P}}_{n}(z) & -\mathcal{Q}_{n}(z)
		\end{bmatrix} + \begin{bmatrix}
			-\mathcal{S}_k(z) \mathcal{P}^{(k+1)}_{n-k}(z) & -\hat{\mathcal{S}}_k(z)\mathcal{Q}^{(k+1)}_{n-k}(z)\\
			-\mathcal{S}_k(z) \mathcal{P}^{(k+1)}_{n-k-1}(z) & -\hat{\mathcal{S}}_k(z)\mathcal{Q}^{(k+1)}_{n-k-1}(z)
		\end{bmatrix}
	\end{align*}
	which proves the theorem.
\end{proof}
Next, we compare the computational complexity between both approaches to compute perturbed $\mathcal{R}_{I}$ polynomials. Further, a comparative study of complexity involved in the computation of perturbed $\mathcal{R}_{I}$ polynomials (discussed in this work) and perturbed $\mathcal{R}_{II}$ polynomials (discussed in \cite{swami vinay R2 2022}) from both approaches is also provided.
\subsection{Comparison of computational cost between the classical and the transfer matrix method} In this section, we provide details for $\mathcal{P}_{4}(x;\mu_0,\nu_0,\mu_1,\nu_1)$ by explicitly counting the multiplication operation (abbreviated as \lq prod' now onwards) involved. Consider two consecutive perturbations at $k=0$ and $k=1$ in \eqref{R1}. In the Transfer matrix method, the following steps are involved:
\begin{enumerate}
	\item Computation of entries of transfer matrices $\mathbf{M}_{0}$ and $\mathbf{M}_{1}$ and product of these two matrices.
	\item Multiplication of the first row of the resultant matrix with the column vector $[\mathcal{P}_{4}(x),-\mathcal{Q}_{4}(x)]^T$.	
\end{enumerate}
Because the multiplication operation is considered to be the most expensive, we only consider counting distinct \lq prod' among real coefficients. The transfer matrices $\mathbf{M}_{0}$ and $\mathbf{M}_{1}$ are
\begin{align*}
	\mathbf{M}_{0}= \begin{bmatrix}
		0	& \mu_0\\
		0	& 0
	\end{bmatrix}, \quad \mathbf{M}_{1} =\begin{bmatrix}
		a_{11}	& a_{12}\\
		a_{21}	&  a_{22}
	\end{bmatrix},
\end{align*}
where $a_{11}=\mu_1(x-c_0)+\nu_1\lambda_1(x-a_1)$, $a_{12} = \mu_1(x-c_0)^2+(\nu_1-1)\lambda_1(x-a_1)(x-c_0)$, $a_{21}=-\mu_1$ and $a_{22}=-\mu_1(x-c_0)+\lambda_1(x-a_1)$. In this case, $a_{11}$ involves three distinct \lq prod', namely $\mu_1c_0$ and $\nu_1\lambda_1$. Similarly, $a_{12}$, $a_{21}$, and $a_{22}$ each have $5$, $0$, and $0$ distinct \lq prod', respectively (because $\mu_1c_0$ is already computed in $a_{11}$, it will not be counted again). The product $\mathbf{M}_{0}\mathbf{M}_{1}$
\begin{align*}
	\mathbf{M}_{0}\mathbf{M}_{1}= \begin{bmatrix}
		-\mu_0\mu_1	& \mu_0[\mu_1(x-c_0)+\lambda_1(x-a_1)]\\
		0	& 0
	\end{bmatrix}=\begin{bmatrix}
		b_{11}	& b_{12}\\
		b_{21}	&  b_{22}
	\end{bmatrix},
\end{align*}
requires 4 \lq prod'. As $\mathcal{P}_{4}(x)$ and $\mathcal{Q}_{4}(x)$ are (known) polynomials of degrees $4$ and $3$, $b_{11} \times \mathcal{P}_{4}(x)$ requires 5 \lq prod', and $b_{12} \times \mathcal{Q}_{4}(x)$ requires 8 \lq prod' (since $b_{12}$ in itself is a polynomial of degree 1). As a result, the computation of $\mathcal{P}_{4}(x;\mu_0,\nu_0,\mu_1,\nu_1)$ requires a total of 25 \lq prod'. \par
On the other hand, in the classical method, we have
\begin{align*}
	&\mathcal{P}_{0}(x)=1, \\
	&\mathcal{P}_{1}(x;\mu_0,\nu_0)=(x-c_{0}-\mu_0)\mathcal{P}_0(x), \\
	&\mathcal{P}_{2}(x;\mu_0,\nu_0,\mu_1,\nu_1)=(x-c_{1}-\mu_1)\mathcal{P}_1(x;\mu_0,\nu_0)-\nu_1\lambda_{1} (x^2+1)\mathcal{P}_{0}(x),\\
	&\mathcal{P}_{3}(x;\mu_0,\nu_0,\mu_1,\nu_1)=(x-c_{2})\mathcal{P}_2(x;\mu_0,\nu_0,\mu_1,\nu_1)-\lambda_{2} (x^2+1)\mathcal{P}_{1}(x;\mu_0,\nu_0), \\
	&\mathcal{P}_{4}(x;\mu_0,\nu_0,\mu_1,\nu_1)=(x-c_{3})\mathcal{P}_3(x;\mu_0,\nu_0,\mu_1,\nu_1)-\lambda_{3} (x^2+1)\mathcal{P}_{2}(x;\mu_0,\nu_0,\mu_1,\nu_1).
\end{align*}
Substitution can now be used to compute $\mathcal{P}_{4}(x;\mu_0,\nu_0,\mu_1,\nu_1)$. In the computation, 42 distinct \lq prod' are involved. In this case, the Transfer matrix method outperforms the classical method in terms of computational speed. Furthermore, we have compared the methods for higher degree polynomials (see Conclusion 2) and discovered that the difference in the number of \lq prod' between these two approaches grows even faster.

\subsubsection{Conclusions}
\begin{enumerate}
\item Both approaches have been shown to be equally efficient for generating lower-degree polynomials with a single perturbation.
\item According to \Cref{Fig 1}, the classical method requires less computation for $\mathcal{P}_{2}(x;\mu_0,\break \nu_0,\mu_1,\nu_1)$ and $\mathcal{P}_{3}(x;\mu_0,\nu_0,\mu_1,\nu_1)$. However, the number of \lq prod' involved in the computation of higher degree perturbed polynomials (with two consecutive perturbations) by the classical method (blue dashed line) is significantly greater than that involved in Transfer matrix method (solid red line). Furthermore, the growth of \lq prod' involved in the classical method is non-linear.
\item The cost of computing perturbed perturbed $\mathcal{R}_{I}$ (solid red line) and $\mathcal{R}_{II}$ (dash-dot magenta line) polynomials (see \cite{swami vinay R2 2022}) from the transfer matrix method is compared in \Cref{Fig 1}. Observe that, upto degree 4, lesser \lq prod' are required in the computation of perturbed $\mathcal{R}_{II}$ polynomials than in perturbed $\mathcal{R}_{I}$ polynomials, but after degree 5, the situation reverses, i.e., requiring more \lq prod' in the computation of perturbed $\mathcal{R}_{I}$ polynomials than in perturbed $\mathcal{R}_{II}$ polynomials.
\item It is interesting to note that (see \Cref{Fig 1}) the complexity of the classical method for computing perturbed $\mathcal{R}_{I}$ (dashed blue line) is higher than that of the perturbed $\mathcal{R}_{II}$ (dotted brown line) polynomials (see \cite{swami vinay R2 2022}).
\end{enumerate}
\begin{table}
	\caption{}\label{T6}
	\renewcommand{\arraystretch}{1.03}
	\centering
\begin{tabular}[b]{|p{2cm}|p{2.2cm}|p{2.2cm}|p{2cm}|p{2.2cm}|}
	\hline
	Degree of $\mathcal{P}_{n}(x)$& Classical Method $\mathcal{R}_{I}$ & Classical Method $\mathcal{R}_{II}$ &Transfer Matrix $\mathcal{R}_{I}$ &Transfer Matrix $\mathcal{R}_{II}$ \\
	\hline
	2 & 6  & 5 & 21  & 16\\
	\hline
	3 & 15 & 11 & 22  & 20\\
	\hline
	4 & 42 & 31 & 25  & 24\\
	\hline
	5 & 125 & 78 & 28  & 28 \\
	\hline
\end{tabular}
\end{table}
\begin{figure}
	\begin{center}
		\includegraphics[scale=0.3]{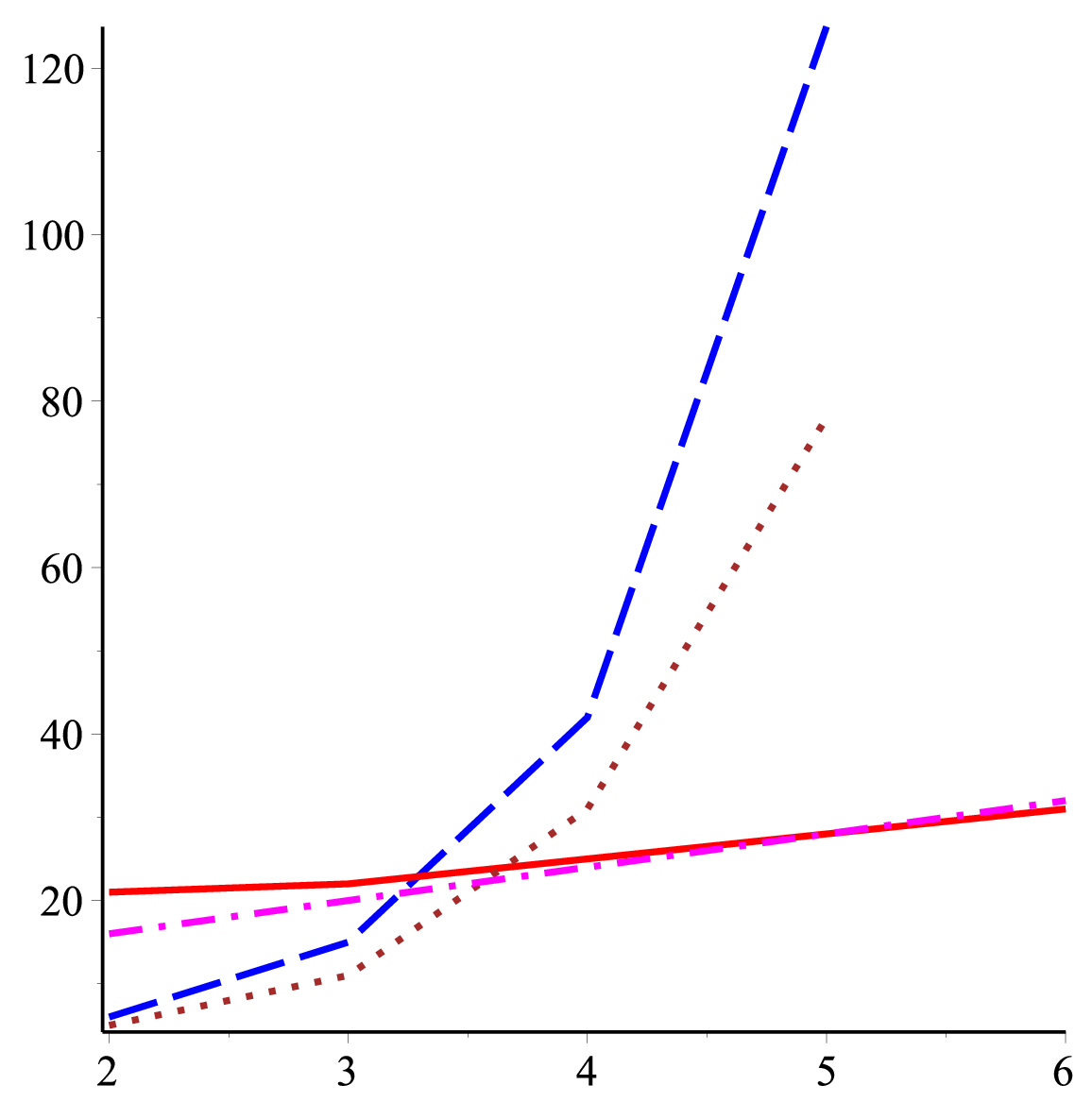}
		\caption{}
		\label{Fig 1}
	\end{center}
\end{figure}

\subsection{Inclusion, interlacing and monotonicity properties of zeros}\label{Inclusion interlacing and monotonicity}

To discuss the zeros, we consider a special form of $\mathcal{R}_{I}$ type recurrence with $a_n=0$, $\forall ~ n$ and given by \eqref{special R1}. In the view of recurrence \eqref{special R1}, \Cref{Theorem s_k_x R1 pp} implies the following corollary:
\begin{corollary}\label{corollary s_k_x R1 pp}
The following formulas hold:
\begin{align*}
	{\mathcal{P}}_{n+1}(x;\mu_k,\nu_k) &= \mathcal{P}_{n+1}(x) - \mathcal{S}_k(x)\mathcal{P}^{(k+1)}_{n-k}(x), \qquad \mbox{$n \geq k$}, \\
\text{and} \quad 
{\mathcal{Q}}_{n+1}(x;\mu_k,\nu_k) &= \mathcal{Q}_{n+1}(x) + \hat{\mathcal{S}}_k(x)\mathcal{Q}^{(k+1)}_{n-k}(x), \qquad \mbox{$n \geq k$},
\end{align*}
where $\mathcal{S}_k(x)=\mu_k \mathcal{P}_k(x)+(\nu_k-1)\lambda_k x \mathcal{P}_{k-1}(x)$ and $\hat{\mathcal{S}}_k(x)=-\mu_k \mathcal{Q}_k(x)-(\nu_k-1)\lambda_k x \mathcal{Q}_{k-1}(x)$.
\end{corollary}
From \eqref{D_Pn}, we get $\mathcal{P}_{n+1}(x)$ and $\mathcal{P}^{(k+1)}_{n-k}(x)$ are linearly independent.
\begin{remark}
If $\mu_k=0$, then the degree of $\mathcal{S}_k(x)$ is $k$. Similarly, if $\nu_k=1$, then the degree of $\mathcal{S}_k(x)$ is $k$. This leads to the conclusion that the degree of $\mathcal{S}_k(x)$ is equal for both co-recursive and co-dilated cases. Note that this is not true for OPRL satisfying TTRR, see \cite[Theorem 2.1]{Castillo co-polynomials on real line 2015} {\rm(also see \cite{Paco perturbed recurrence 1990})}.
\end{remark}

\begin{theorem}\label{P_mu_nu to P_n common zeros R1 pp}
If $\mathcal{P}_{n+1}(x;\mu_k,\nu_k)$ and $\mathcal{P}_{n+1}(x)$ share a zero, then that zero will also be shared by $\mathcal{S}_k(x)$. 	
%$\mathcal{P}_{n+1}(x;\mu_k,\nu_k)$ and $\mathcal{P}_{n+1}(x)$ share at most the zeros of $\mathcal{S}_k$.
%and $\mathcal{P}_{k-1}$
\end{theorem}
\begin{proof}
Assume $\mathcal{P}_{n+1}(x;\mu_k,\nu_k)$ and $\mathcal{P}_{n+1}(x)$ share a common zero $\beta$ that is not the zero of $\mathcal{S}_k$. Let Y represent the set of zeros of $\mathcal{S}_k(x)$. Then, because $\beta \in \mathbb{C}\backslash (X \cup Y)$, \Cref{Theorem s_k_x R1 pp} implies $\mathcal{P}^{(k+1)}_{n-k}(\beta)=0$, a violation of the linear independence of $\mathcal{P}_{n+1}(x)$ and $\mathcal{P}^{(k+1)}_{n-k}(x)$.
\end{proof}

Once it is established that $\mathcal{P}_{n}(x;\mu_k,\nu_k)$ and $\mathcal{P}_{n}(x)$ share some common zeros, the next goal is to find the location in the perturbed polynomial sequence that contains all of the common zeros.

\begin{theorem}\label{P_2n+1 P_n common zeros R1 pp}
If $c_n$ and $\lambda_{n}$ are constants in \eqref{special R1}, then the set of zeros of  $\mathcal{P}_{n}(x)$ and $\mathcal{Q}_{n}(x) $ is the subset of the set of zeros of $ \mathcal{P}_{2n+1}(x)$ and $ \mathcal{Q}_{2n}(x)$, respectively.
\end{theorem}
\begin{proof}
For $c_n=c$ and $\lambda_n=\lambda$, we have $\mathbf{T}_{i}= \begin{bmatrix}
	x-c & -\lambda x \\
	1 & 0
\end{bmatrix}, \forall ~i$. Hence, we can write 
\begin{align*}
	\mathbf{T}_{n}\ldots \mathbf{T}_{0}=\mathbf{T}_{0}\ldots \mathbf{T}_{n}
	=\begin{bmatrix}
		\mathcal{P}_{n+1}(x) & -\lambda x\mathcal{P}_{n}(x) \\
		\mathcal{Q}_{n+1}(x) & -\lambda x\mathcal{Q}_{n}(x)
	\end{bmatrix}, 
\end{align*}
where $\mathbf{T}_{n}$ (or $\mathbf{T}_{0}$) is given by
\begin{align*}
	\mathbf{T}_{n}=\begin{bmatrix}
		x-c & -\lambda x \\
		1 & 0
	\end{bmatrix}	=\begin{bmatrix}
		\mathcal{P}_{1}(x) & -\lambda x\mathcal{P}_{0}(x) \\
		1 & 0
	\end{bmatrix}  .
\end{align*}  Now, from \eqref{F_n+1 to T_0 R1 pp}, we obtain
\begin{align}
\begin{bmatrix}
	\mathcal{P}_{2n+1}(x) & -\mathcal{Q}_{2n+1}(x)\\
	{\mathcal{P}}_{2n}(x) & -{\mathcal{Q}}_{2n}(x)
\end{bmatrix}&= (\mathbf{T}_n \ldots \mathbf{T}_{0})
\begin{bmatrix}
	\mathcal{P}_{n}(x) & -\mathcal{Q}_{n}(x) \\
	{\mathcal{P}}_{n-1}(x) & -\mathcal{Q}_{n-1}(x)
\end{bmatrix}. \label{Theorem 4.3 eq 1}
\end{align} 
Therefore, from \eqref{Theorem 4.3 eq 1}, we get
\begin{align*}
\begin{bmatrix}
	\mathcal{P}_{2n+1}(x) & -\mathcal{Q}_{2n+1}(x)\\
	{\mathcal{P}}_{2n}(x) & -{\mathcal{Q}}_{2n}(x)
\end{bmatrix} &=
\begin{bmatrix}
	\mathcal{P}_{n+1}(x) & -\lambda x\mathcal{P}_{n}(x) \\
	\mathcal{Q}_{n+1}(x) & -\lambda x\mathcal{Q}_{n}(x)
\end{bmatrix} \begin{bmatrix}
	\mathcal{P}_{n}(x) & -\mathcal{Q}_{n}(x) \\
	{\mathcal{P}}_{n-1}(x) & -\mathcal{Q}_{n-1}(x)
\end{bmatrix}. \\
\Longrightarrow \mathcal{P}_{2n+1}(x) &= \mathcal{P}_{n}(x)[\mathcal{P}_{n+1}(x)-\lambda x \mathcal{P}_{n-1}(x)],\\
\mathcal{Q}_{2n}(x) &= \mathcal{Q}_{n}(x)[\mathcal{Q}_{n+1}(x)-\lambda x\mathcal{Q}_{n-1}(x)],
\end{align*}
which means $\mathcal{P}_{n}(x)$ is a factor of $\mathcal{P}_{2n+1}(x)$ and $\mathcal{Q}_{n}(x)$ is a factor of $\mathcal{Q}_{2n}(x)$ and the conclusion follows.
%For $c_n=c$ and $\lambda_n=\lambda$, from \eqref{P_n+1 = A_n P_n}, we have
%\begin{align*}
%	\begin{bmatrix}
%		\mathcal{P}_{2n+1}(x) \\
%		{\mathcal{P}}_{2n}(x)
%	\end{bmatrix}&=
%	\begin{bmatrix}
%		x-c & -\lambda x \\
%		1 & 0
%	\end{bmatrix} \begin{bmatrix}
%		\mathcal{P}_{2n}(x) \\
%		{\mathcal{P}}_{2n-1}(x)
%	\end{bmatrix} \\
%	&=\begin{bmatrix}
%		\mathcal{P}_{1}(x) & -\lambda x\mathcal{P}_{0}(x) \\
%		1 & 0
%	\end{bmatrix} \begin{bmatrix}
%		x-c & -\lambda x \\
%		1 & 0
%	\end{bmatrix} \begin{bmatrix}
%		\mathcal{P}_{2n-1}(x) \\
%		{\mathcal{P}}_{2n-2}(x)
%	\end{bmatrix} \\
%	&=\begin{bmatrix}
%		\mathcal{P}_{2}(x) & -\lambda x\mathcal{P}_{1}(x) \\
%		q(x) & s(x)
%	\end{bmatrix} \begin{bmatrix}
%		\mathcal{P}_{2n-1}(x) \\
%		{\mathcal{P}}_{2n-2}(x)
%	\end{bmatrix}.
%\end{align*}
%After $n-1$ interations as above, we get
%\begin{align*}
%	\begin{bmatrix}
%		\mathcal{P}_{2n+1}(x) \\
%		{\mathcal{P}}_{2n}(x)
%	\end{bmatrix}&=
%	\begin{bmatrix}
%		\mathcal{P}_{n+1}(x) & -\lambda x\mathcal{P}_{n}(x) \\
%		q'(x) & s'(x)
%	\end{bmatrix} \begin{bmatrix}
%		\mathcal{P}_{n}(x) \\
%		{\mathcal{P}}_{n-1}(x)
%	\end{bmatrix}, \\
%	\mathcal{P}_{2n+1}(x) &= \mathcal{P}_{n}(x)[\mathcal{P}_{n+1}(x)-\lambda x\mathcal{P}_{n-1}(x)],
%\end{align*}
%which means $\mathcal{P}_{n}(x)$ is a factor of $\mathcal{P}_{2n+1}(x)$ and the proof is complete.
\end{proof}

\begin{proposition}\label{corollary zeros sharing R2 pp}
The polynomials $\mathcal{P}_{n}(x;\mu_k)$ and $\mathcal{P}_{n}(x)$ share $k$ zeros, that are identical to the zeros of $\mathcal{P}_{k}(x)$. Furthermore, the set of zeros of  $\mathcal{P}_{k}(x)$ and $\mathcal{Q}_{k}(x) $ is the subset of the set of zeros of $ \mathcal{P}_{2k+1}(x)$ and $ \mathcal{Q}_{2k}(x)$, respectively, for $c_n=c$ and $\lambda_n=\lambda$.
\end{proposition}
\begin{proof}
The first part follows from \Cref{P_mu_nu to P_n common zeros R1 pp}. Proceeding as in the proof of \Cref{P_2n+1 P_n common zeros R1 pp}, we obtain
\newline
$ \displaystyle
\begin{bmatrix}
	\mathcal{P}_{2k+1}(x;\mu_k) & -\mathcal{Q}_{2k+1}(x;\mu_k)\\
	{\mathcal{P}}_{2k}(x;\mu_k) & -\mathcal{Q}_{2k}(x;\mu_k)
\end{bmatrix}
$
\begin{align*}
&=\mathbf{T}_{2k+1} \ldots \mathbf{T}_{k+1} \mathbf{T}_k(\mu_k,\nu_k=1)  \begin{bmatrix}
	\mathcal{P}_{k}(x) & -\mathcal{Q}_{k}(x)\\
	{\mathcal{P}}_{k-1}(x) & -{\mathcal{Q}}_{k-1}(x)
\end{bmatrix} \\
&=\begin{bmatrix}
	\mathcal{P}_{k}(x) & -\lambda x\mathcal{P}_{k-1}(x) \\
	\mathcal{Q}_{k}(x) & -\lambda x\mathcal{Q}_{k-1}(x)
\end{bmatrix}(\mathbf{T}_k+\mathbf{N}_k)
\begin{bmatrix}
	\mathcal{P}_{k}(x) & -\mathcal{Q}_{k}(x)\\
	{\mathcal{P}}_{k-1}(x) & -{\mathcal{Q}}_{k-1}(x)
\end{bmatrix}\\
&=
\begin{bmatrix}
	\mathcal{P}_{k+1}(x)-\mu_k\mathcal{P}_{k}(x) & -\lambda x\mathcal{P}_{k}(x) \\
	\mathcal{Q}_{k+1}(x)-\mu_k\mathcal{Q}_{k}(x) & -\lambda x\mathcal{Q}_{k}(x)
\end{bmatrix} \begin{bmatrix}
	\mathcal{P}_{k}(x) & -\mathcal{Q}_{k}(x)\\
	{\mathcal{P}}_{k-1}(x) & -{\mathcal{Q}}_{k-1}(x)
\end{bmatrix}.
\end{align*}
This implies
\begin{align*}
&\mathcal{P}_{2k+1}(x;\mu_k) = \mathcal{P}_{k}(x)[\mathcal{P}_{k+1}(x)- \mu_k \mathcal{P}_{k}(x) -\lambda x\mathcal{P}_{k-1}(x)] \quad \mbox{and} \\
&\mathcal{Q}_{2k}(x;\mu_k) = \mathcal{Q}_{k}(x)[\mathcal{Q}_{k+1}(x)- \mu_k \mathcal{Q}_{k}(x) -\lambda x\mathcal{Q}_{k-1}(x)]. \qedhere
\end{align*}
%\begin{align*}
%	\begin{bmatrix}
%		\mathcal{P}_{2k+1}(x;\mu_k) \\
%		{\mathcal{P}}_{2k}(x;\mu_k)
%	\end{bmatrix}&=
%	\begin{bmatrix}
%		\mathcal{P}_{k+1}(x)-\mu_k\mathcal{P}_{k}(x) & -\lambda x\mathcal{P}_{k}(x) \\
%		q''(x) & s''(x)
%	\end{bmatrix} \begin{bmatrix}
%		\mathcal{P}_{k}(x) \\
%		{\mathcal{P}}_{k-1}(x)
%	\end{bmatrix}, \\
%	\mathcal{P}_{2k+1}(x;\mu_k) &= \mathcal{P}_{k}(x)[\mathcal{P}_{k+1}(x)- \mu_k \mathcal{P}_{k}(x) -\lambda x\mathcal{P}_{k-1}(x)]. \qedhere
%\end{align*}
\end{proof}

\begin{proposition}\label{Containment prop co-dilated}
For some $n \geq k$, the $k-1$ zeros of $\mathcal{P}_{k-1}(x)$ are shared by the co-dilated polynomials $\mathcal{P}_{n}(x;\nu_k)$ and the polynomials $\mathcal{P}_{n}(x)$. Specifically, for $c_n=c$ and $\lambda_n=\lambda$, the set of zeros of $\mathcal{P}_{k-1}(x)$, $\mathcal{Q}_{k-1}(x)$, $\mathcal{P}_{k}(x)$ and $\mathcal{Q}_{k}(x)$ is the subset of the set of zeros of $\mathcal{P}_{2k-1}(x;\nu_k)$, $\mathcal{Q}_{2k-2}(x;\nu_k)$, $\mathcal{P}_{2k+1}(x;\nu_k)$ and $\mathcal{Q}_{2k}(x;\nu_k)$, respectively.	
\end{proposition}
\begin{proof}
The first part is a direct consequence of \Cref{P_mu_nu to P_n common zeros R1 pp}. Using the conclusion of \Cref{Theorem s_k_x R1 pp} and the technique presented in \Cref{P_2n+1 P_n common zeros R1 pp}, we get
\newline
$ \displaystyle\begin{bmatrix}
	\mathcal{P}_{2k-1}(x;\nu_k) & -\mathcal{Q}_{2k-1}(x;\nu_k)\\
	{\mathcal{P}}_{2k-2}(x;\nu_k) & -{\mathcal{Q}}_{2k-2}(x;\nu_k)
\end{bmatrix}
$
\begin{align*}
&= \mathbf{T}_{2k-1} \ldots \mathbf{T}_{k+1} \mathbf{T}_k(\mu_k=0,\nu_k)  \begin{bmatrix}
	\mathcal{P}_{k}(x) & -\mathcal{Q}_{k}(x)\\
	{\mathcal{P}}_{k-1}(x) & -{\mathcal{Q}}_{k-1}(x)
\end{bmatrix}\\
&=\begin{bmatrix}
	\mathcal{P}_{k-2}(x) & -\lambda x\mathcal{P}_{k-3}(x) \\
	\mathcal{Q}_{k-2}(x) & -\lambda x\mathcal{Q}_{k-3}(x)
\end{bmatrix}(\mathbf{T}_k+\mathbf{N}_k)
\begin{bmatrix}
	\mathcal{P}_{k}(x) & -\mathcal{Q}_{k}(x)\\
	{\mathcal{P}}_{k-1}(x) & -{\mathcal{Q}}_{k-1}(x)
\end{bmatrix}\\
&=
\begin{bmatrix}
	\mathcal{P}_{k-1}(x) & -\nu_k\lambda x\mathcal{P}_{k-2}(x) \\
	\mathcal{Q}_{k-1}(x) & -\nu_k\lambda x\mathcal{Q}_{k-2}(x)
\end{bmatrix} \begin{bmatrix}
	\mathcal{P}_{k}(x) & -\mathcal{Q}_{k}(x)\\
	{\mathcal{P}}_{k-1}(x) & -{\mathcal{Q}}_{k-1}(x)
\end{bmatrix}.
\end{align*}
This implies
\begin{align*}
&\mathcal{P}_{2k-1}(x;\nu_k) = \mathcal{P}_{k-1}(x) [\mathcal{P}_{k}(x)-\nu_k\lambda x\mathcal{P}_{k-2}(x)] \quad \mbox{and}\\
&\mathcal{Q}_{2k-2}(x;\nu_k) = \mathcal{Q}_{k-1}(x) [\mathcal{Q}_{k}(x)-\nu_k\lambda x\mathcal{Q}_{k-2}(x)].	
\end{align*}
The remaining part of the proposition follows along similar lines.
%\begin{align*}
%	\begin{bmatrix}
%		\mathcal{P}_{2k-1}(x;\nu_k) \\
%		{\mathcal{P}}_{2k-2}(x;\nu_k)
%	\end{bmatrix}&=
%	\begin{bmatrix}
%		\mathcal{P}_{k-1}(x) & -\nu_k\lambda x\mathcal{P}_{k-2}(x) \\
%		q''(x) & s''(x)
%	\end{bmatrix} \begin{bmatrix}
%		\mathcal{P}_{k}(x) \\
%		{\mathcal{P}}_{k-1}(x)
%	\end{bmatrix}, \\
%	\mathcal{P}_{2k-1}(x;\nu_k) &= \mathcal{P}_{
%		k-1}(x)[\mathcal{P}_{k}(x)-\nu_k\lambda x\mathcal{P}_{k-2}(x)].
%\end{align*}
%The last part of the proposition also follows from a similar analysis.
\end{proof}

\begin{remark}
It should be noted that, for co-recursive polynomials, the set of zeros of $\mathcal{P}_{k-1}(x)$ is not contained in the set of zeros of $\mathcal{P}_{2k-1}(x;\mu_k)$. However, this fact holds for co-dilated polynomials {\rm(see \Cref{Containment prop co-dilated})}.
%Similar to the relationship $Z[\mathcal{P}_{k-1}(x)] \subset Z[\mathcal{P}_{2k-1}(x;\nu_k)]$ for co-dilated polynomials given in \Cref{Containment prop co-dilated}, a relationship like $Z[\mathcal{P}_{k-1}(x)] \subset Z[\mathcal{P}_{2k-1}(x;\mu_k)]$ for co-recursive polynomials is not possible. 
This is due to the fact that a similar calculation for co-recursive polynomials produces
\begin{align*}
	\mathcal{P}_{2k-1}(x;\mu_k)&= \mathcal{P}_{k}(x)[\mathcal{P}_{k-1}(x)-\mu_k\mathcal{P}_{k-2}(x)] -\lambda x\mathcal{P}_{k-2}(x)\mathcal{P}_{k-1}(x).
\end{align*}
\end{remark}

We need the following result given in \cite{walter toda laurent 2002} to prove our next result.

\begin{theorem}\cite[Lemma 2.1]{walter toda laurent 2002}\label{walter zeros lemma}
The zeros $x^{(n)}_j$, $j=1,2, \ldots n$ of $\mathcal{P}_n(x)$ are real, simple and positive. Assuming the ordering $x^{(n)}_{j-1} < x^{(n)}_{j}$, we have the interlacing property
\begin{align*}
	0<	x^{(n+1)}_{1} < x^{(n)}_{1}<x^{(n+1)}_2<x^{(n)}_2 < \cdots < x^{(n+1)}_{n} < x^{(n)}_{n} <x^{(n+1)}_{n+1}, \quad  n \geq 1.
\end{align*}
\end{theorem}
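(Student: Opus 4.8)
The plan is to prove the three assertions (reality, simplicity and positivity of the zeros, together with the strict interlacing) simultaneously by induction on $n$, working directly from the recurrence \eqref{special R1} and using only the positivity of $\{c_n\}$ and $\{\lambda_n\}$. The inductive hypothesis at level $n$ asserts that $\mc{P}_{n-1}$ and $\mc{P}_n$ each have $n-1$ and $n$ simple positive zeros respectively and that these strictly interlace; the step then produces the same statement for the pair $\mc{P}_n,\mc{P}_{n+1}$. The base cases $\mc{P}_0\equiv 1$ and $\mc{P}_1(x)=x-c_0$ (single zero at $c_0>0$) are immediate.

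Three elementary facts will drive the sign bookkeeping. First, setting $x=0$ in \eqref{special R1} annihilates the $\lambda_n x\,\mc{P}_{n-1}$ term, so $\mc{P}_{n+1}(0)=-c_n\mc{P}_n(0)$, whence $\mc{P}_n(0)=(-1)^n\prod_{j=0}^{n-1}c_j$; in particular $\mc{P}_n(0)\neq 0$ and $\operatorname{sgn}\mc{P}_n(0)=(-1)^n$. Second, being monic, $\mc{P}_n(x)\to+\infty$ as $x\to+\infty$. Third, evaluating \eqref{special R1} at a zero $x^{(n)}_j$ of $\mc{P}_n$ gives $\mc{P}_{n+1}(x^{(n)}_j)=-\lambda_n x^{(n)}_j\,\mc{P}_{n-1}(x^{(n)}_j)$, and since $\lambda_n x^{(n)}_j>0$ this forces $\operatorname{sgn}\mc{P}_{n+1}(x^{(n)}_j)=-\operatorname{sgn}\mc{P}_{n-1}(x^{(n)}_j)$.

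For the inductive step I would write the zeros of $\mc{P}_n$ as $0<x^{(n)}_1<\dots<x^{(n)}_n$. Because the zeros of $\mc{P}_{n-1}$ interlace them from above and $\mc{P}_{n-1}$ has positive leading coefficient with smallest zero exceeding $x^{(n)}_1$, one checks that $\operatorname{sgn}\mc{P}_{n-1}(x^{(n)}_j)=(-1)^{n-j}$, so the third fact gives $\operatorname{sgn}\mc{P}_{n+1}(x^{(n)}_j)=(-1)^{n-j+1}$. Hence $\mc{P}_{n+1}$ changes sign on each interval $(x^{(n)}_j,x^{(n)}_{j+1})$, $1\le j\le n-1$, yielding $n-1$ zeros; it also changes sign on $(x^{(n)}_n,\infty)$ (negative at $x^{(n)}_n$, positive at $+\infty$) and, comparing $\operatorname{sgn}\mc{P}_{n+1}(0)=(-1)^{n+1}$ with $\operatorname{sgn}\mc{P}_{n+1}(x^{(n)}_1)=(-1)^n$, on $(0,x^{(n)}_1)$. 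Since $\deg\mc{P}_{n+1}=n+1$, these $n+1$ sign changes exhaust the zeros: each is simple and positive, falling one per interval, which is precisely the claimed chain $0<x^{(n+1)}_1<x^{(n)}_1<\dots<x^{(n)}_n<x^{(n+1)}_{n+1}$.

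The hard part will be the correct treatment of the extreme intervals, above all $(0,x^{(n)}_1)$, since it is there that positivity of the zeros is actually secured; this requires pinning down $\operatorname{sgn}\mc{P}_{n-1}$ near the origin, for which the value $\mc{P}_{n-1}(0)=(-1)^{n-1}\prod_{j=0}^{n-2}c_j$ and the absence of zeros of $\mc{P}_{n-1}$ in $(0,x^{(n)}_1)$ are used. The hypotheses $c_n,\lambda_n>0$ enter decisively, guaranteeing both $\mc{P}_n(0)\neq 0$ and the sign reversal at the zeros. Strictness (no common zero of $\mc{P}_n$ and $\mc{P}_{n+1}$) can be obtained independently: a shared zero $\alpha$ would force, via \eqref{special R1}, $\lambda_n\alpha\,\mc{P}_{n-1}(\alpha)=0$, hence $\mc{P}_{n-1}(\alpha)=0$, and descending the recurrence would contradict $\mc{P}_0\equiv 1$. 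Alternatively, one could route the argument through the positive L-orthogonality measure on $(0,\infty)$ furnished by Favard's theorem, together with the Casoratti identity $\mc{P}_n\mc{Q}_{n+1}-\mc{P}_{n+1}\mc{Q}_n=x^n\prod_{j=1}^n\lambda_j$, which is positive for $x>0$ and encodes the same separation of zeros.
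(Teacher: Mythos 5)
Your argument is correct, and it is worth noting that the paper itself does not prove this statement at all: it is imported verbatim by citation from \cite[Lemma 2.1]{walter toda laurent 2002}, where the result is obtained in the framework of Laurent (two-point Pad\'e / strong Stieltjes) orthogonality. What you supply instead is a self-contained elementary induction that uses nothing beyond the recurrence \eqref{special R1} and the positivity of $\{c_n\}$ and $\{\lambda_n\}$, and the sign bookkeeping checks out: $\mc{P}_n(0)=(-1)^n\prod_{j=0}^{n-1}c_j\neq 0$ pins down the sign at the origin, the identity $\mc{P}_{n+1}(x^{(n)}_j)=-\lambda_n x^{(n)}_j\mc{P}_{n-1}(x^{(n)}_j)$ together with the inductive interlacing gives $\operatorname{sgn}\mc{P}_{n+1}(x^{(n)}_j)=(-1)^{n-j+1}$, and the resulting $n+1$ sign changes on $(0,x^{(n)}_1),(x^{(n)}_1,x^{(n)}_2),\dots,(x^{(n)}_n,\infty)$ exhaust the degree, forcing one simple zero per interval --- which is exactly the claimed chain, including the crucial leftmost inequality $0<x^{(n+1)}_1<x^{(n)}_1$ that secures positivity. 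Your treatment of the common-zero issue (descending the recurrence to contradict $\mc{P}_0\equiv 1$) is also sound, though it is not even needed once the strict sign alternation is in hand. The trade-off between the two routes: the citation (and your sketched alternative via the positive L-orthogonality measure and the Casoratti identity) places the lemma in the structural context the rest of Section 3 relies on, whereas your induction is more elementary and makes transparent precisely which hypotheses ($a_n=0$, $c_n>0$, $\lambda_n>0$) are responsible for reality, positivity and interlacing --- which is directly relevant to the open problem on the case $a_n\neq 0$ raised at the end of the paper.
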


\begin{theorem}\label{interlacing theorem mu}
Let $l$ be the number of non common zeros between $\mathcal{P}_{n}(x;\mu_k)$ and $\mathcal{P}_{n}(x)$, $n \geq k$. Suppose $x^{(n)}_j(\mu)$ and $x^{(n)}_j$, $j=1,2, \ldots l$, are the zeros corresponding to $\mathcal{P}_{n}(x;\mu_k)$ and $\mathcal{P}_{n}(x)$. If $\mu < 0$, then 
\begin{align}\label{interlacing non common}
	%x^{(n)}_l(\mu) < x^{(n)}_l < x^{(n)}_{l-1}(\mu) < x^{(n)}_{l-1}< \ldots < x^{(n)}_{1}(\mu) < x^{(n)}_{1}
	x^{(n)}_{1}(\mu) <x^{(n)}_{1}<x^{(n)}_{2}(\mu)< \cdots < x^{(n)}_{l-1} < x^{(n)}_l(\mu) < x^{(n)}_{l},
\end{align}
where the role of the zeros $x^{(n)}_j(\mu)$ and $x^{(n)}_j$, $j=1,2, \ldots l$ gets intercharged when $\mu > 0$.
\end{theorem}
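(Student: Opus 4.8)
The plan is to reduce the statement to a sign‑alternation property of the perturbed polynomial evaluated at the zeros of the unperturbed one, and then to read off both the existence of interlaced zeros and the direction of the shift from the sign of $\mu$. First I would specialise \Cref{Theorem s_k_x} to the co‑recursive case $\nu_k=1$, so that $\mc{S}_k(x)=\mu_k\mc{P}_k(x)$ and $\mc{P}_n(x;\mu_k)=\mc{P}_n(x)-\mu_k\mc{P}_k(x)\mc{P}^{(k)}_{n-k}(x)$ for $n>k$, with $\mc{P}_n(\cdot;\mu_k)$ and $\mc{P}_n$ both monic of degree $n$. By \Cref{walter zeros lemma} the zeros $x^{(n)}_1<\dots<x^{(n)}_n$ of $\mc{P}_n$ are real, simple and positive. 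By the preceding theorem and \Cref{corollary zeros sharing}, a zero of $\mc{P}_n$ is common to $\mc{P}_n(\cdot;\mu_k)$ exactly when it is a zero of $\mc{P}_k$; hence the $l$ non‑common zeros are precisely those $x^{(n)}_j$ with $\mc{P}_k(x^{(n)}_j)\neq0$, and at each of them $\mc{P}^{(k)}_{n-k}(x^{(n)}_j)\neq0$ by the linear independence recorded in \eqref{D_Pn}.

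The analytic heart is the evaluation at the non‑common zeros: since $\mc{P}_n(x^{(n)}_j)=0$, we get $\mc{P}_n(x^{(n)}_j;\mu_k)=-\mu_k\,\mc{P}_k(x^{(n)}_j)\mc{P}^{(k)}_{n-k}(x^{(n)}_j)$, and I would show that these values alternate in sign as $j$ runs through the ordered non‑common zeros. To control $\mc{P}^{(k)}_{n-k}(x^{(n)}_j)$ I evaluate the Casoratti identity \eqref{D_Pn} at $x=x^{(n)}_j$, where the term carrying $\mc{P}_n$ drops out; using $a_i=0$ this gives $\mc{P}^{(k)}_{n-k}(x^{(n)}_j)=-\big(\prod_{i=k}^{n}\lambda_i\big)(x^{(n)}_j)^{\,n-k+1}\mc{P}_{k-1}(x^{(n)}_j)/\mc{P}_{n+1}(x^{(n)}_j)$. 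Because $\lambda_i>0$ and $x^{(n)}_j>0$, the sign of $\mc{P}_n(x^{(n)}_j;\mu_k)$ equals the sign of $\mu_k$ times that of $\mc{P}_k(x^{(n)}_j)\mc{P}_{k-1}(x^{(n)}_j)/\mc{P}_{n+1}(x^{(n)}_j)$. The factor $\mc{P}_{n+1}(x^{(n)}_j)$ alternates in $j$ by the interlacing of the zeros of $\mc{P}_{n+1}$ and $\mc{P}_n$ in \Cref{walter zeros lemma}; the remaining factor $\mc{P}_k\mc{P}_{k-1}$ is handled by counting the zeros of $\mc{P}_k$ and $\mc{P}_{k-1}$ lying in each gap between consecutive non‑common zeros, the common zeros (which are zeros of $\mc{P}_k$) being absorbed against the extra zeros of $\mc{P}_{n+1}$ forced into that gap by interlacing, so that the net parity yields alternation.

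Once alternation is established, the intermediate value theorem places a zero of $\mc{P}_n(x;\mu_k)$ strictly between each pair of consecutive non‑common zeros $x^{(n)}_j,x^{(n)}_{j+1}$, giving $l-1$ interlaced zeros; the last one is located by comparing the sign of $\mc{P}_n(x;\mu_k)$ at the extreme non‑common zero with its monic behaviour as $x\to+\infty$. Reversing the sign of $\mu_k$ reverses all the evaluations $\mc{P}_n(x^{(n)}_j;\mu_k)$ simultaneously, which is exactly what interchanges the roles of $x^{(n)}_j(\mu)$ and $x^{(n)}_j$; this yields \eqref{interlacing non common} for $\mu<0$ and the interchanged chain for $\mu>0$. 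The same conclusion can be phrased dynamically: differentiating $\mc{P}_n(x_j(\mu);\mu)\equiv0$ gives $x_j'(\mu)=\mc{P}_k(x_j)\mc{P}^{(k)}_{n-k}(x_j)/\partial_x\mc{P}_n(x_j;\mu)$, and the sign information above forces every non‑common zero to move monotonically in the same direction as $\mu$ varies, so the zeros never collide and the interlacing persists.

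I expect the genuine obstacle to be the sign‑alternation step, and within it the bookkeeping of the $\mc{P}_k\mc{P}_{k-1}$ factor across gaps that may contain common zeros. The cleanest route is to recast it as the assertion that the residues $\mc{P}_k(x^{(n)}_j)\mc{P}^{(k)}_{n-k}(x^{(n)}_j)/\mc{P}_n'(x^{(n)}_j)$ all share a single sign, a positivity that ultimately rests on the underlying L‑orthogonality (positive $c_n,\lambda_n$ and positive simple zeros) behind \Cref{walter zeros lemma}. Establishing this uniform sign rigorously, rather than merely the existence of an interlaced zero in each gap, is where the real work lies.
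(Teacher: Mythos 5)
Your overall strategy -- evaluate the perturbed polynomial at the zeros $x^{(n)}_j$ of $\mc{P}_n$, establish that these values alternate in sign, and then count zeros -- is the same as the paper's. But the route you take to the sign alternation has a genuine gap, and it is exactly the step you yourself flag as ``where the real work lies.'' Composing \Cref{Theorem s_k_x} with \eqref{D_Pn} leaves you with
$\mc{P}_n(x^{(n)}_j;\mu_k)=\mu_k\cdot(\text{positive})\cdot\mc{P}_k(x^{(n)}_j)\mc{P}_{k-1}(x^{(n)}_j)/\mc{P}_{n+1}(x^{(n)}_j)$,
and your alternation claim then requires $\mc{P}_k\mc{P}_{k-1}$ to have a single sign at all the non-common zeros of $\mc{P}_n$. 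That is not established by the proposed ``parity bookkeeping,'' and it is false in general: since the zeros of $\mc{P}_k$ and $\mc{P}_{k-1}$ interlace, the product $\mc{P}_k\mc{P}_{k-1}$ changes sign $2k-1$ times on $(0,\infty)$, and the $n$ zeros of $\mc{P}_n$ will generically fall into intervals of both signs. (Part of the blame lies with the paper: \Cref{Theorem s_k_x} and \eqref{D_Pn} use incompatible index conventions for $\mc{P}^{(k)}_{n-k}$ -- the first forces $\mc{P}^{(k)}_1$ to be the constant $1$, the second forces it to be $x-c_k$ -- and taking both at face value is what replaces a square by the cross product $\mc{P}_k\mc{P}_{k-1}$. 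But even granting you the corrected indices, your plan still hinges on a uniform-sign assertion you leave open.)

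The missing idea is to bypass $\mc{P}^{(k)}_{n-k}$ entirely and compute the Casoratti determinant of the pair $(\mc{P}_n,\mc{P}_n(\cdot;\mu_k))$ directly. Both sequences satisfy the identical recurrence for all indices above $k$, so
\begin{align*}
D(\mc{P}_{n},\mc{P}_{n}(\cdot;\mu_k))=\lambda_n x\, D(\mc{P}_{n-1},\mc{P}_{n-1}(\cdot;\mu_k)),\qquad
D(\mc{P}_{k},\mc{P}_{k}(\cdot;\mu_k))=-\mu_k\mc{P}_k^2,
\end{align*}
which is \eqref{D(P_n P_mu)}. The appearance of the \emph{perfect square} $\mc{P}_k^2$ is the whole point: evaluated at a non-common zero $x^{(n)}_j>0$, it gives $-\mc{P}_{n}(x^{(n)}_j;\mu_k)\mc{P}_{n+1}(x^{(n)}_j)=-\mu_k(\text{positive})\mc{P}_k^2(x^{(n)}_j)$, whose sign is that of $-\mu_k$ uniformly in $j$, so the alternation of $\mc{P}_n(x^{(n)}_j;\mu_k)$ is inherited at once from the alternation of $\mc{P}_{n+1}$ at the zeros of $\mc{P}_n$ (\Cref{walter zeros lemma}), with the direction of the shift read off from the sign of $\mu_k$. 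Your closing steps (intermediate value theorem in each gap, the extreme zero from monic growth, and the reversal for $\mu>0$) are fine once this alternation is actually in hand.
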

\begin{proof}
Since $n \geq k$, using \eqref{P_n+1 mu_k nu_k R1 pp}, \eqref{T_k + N_k} and \eqref{casarotti determinant}, we can write
\begin{align}
&\begin{array}{|cc|}
	\mathcal{P}_{n+1}(x;\mu_k) & \mathcal{P}_{n+1}(x) \\
	\mathcal{P}_{n}(x;\mu_k) & \mathcal{P}_{n}(x)
\end{array} = |\mathbf{T}_n \ldots \mathbf{T}_{k+1}| \begin{vmatrix}
	\mathcal{P}_{k+1}(x)-\mu_k\mathcal{P}_{k}(x) & \mathcal{P}_{k+1}(x) \\
	\mathcal{P}_{k}(x) & \mathcal{P}_{k}(x)
\end{vmatrix}. \nonumber\\
%&= \lambda_n(x^2+\omega^2)D(\mathcal{P}_{n-1}(x), \mathcal{P}_{n-1}(x;\mu_k)).\\
&\Longrightarrow \quad \mathcal{P}_{n}(x)\mathcal{P}_{n+1}(x;\mu_k)-\mathcal{P}_{n}(x;\mu_k)\mathcal{P}_{n+1}(x) = -\mu_k(\lambda_n \ldots \lambda_k)x^{n-k}\mathcal{P}^2_{k}(x).\label{D(P_n P_mu)}
\end{align}
We have $(-1)^j \mathcal{P}_{n+1}(x^{(n)}_j)> 0$, $j=1,2, \ldots n$. Recall that $x^{(n)}_j$, $j=1,2, \ldots n$ are the $n$ real zeros corresponding to $\mathcal{P}_{n}(x)$. First, consider the case when $\mathcal{P}_{n}(x;\mu_k)$ and $\mathcal{P}_{n}(x)$ have no common zeros. Now, when $\mu < 0$ and $\{\lambda_{n}\}_{n \geq 1}$ is a positive sequence of real numbers, From  \Cref{walter zeros lemma} and \eqref{D(P_n P_mu)}, we have $-\mathcal{P}_{n}(x^{(n)}_j;\mu_k)\mathcal{P}_{n+1}(x^{(n)}_j) > 0$, which implies 
\begin{align*}
	(-1)^{j+1}\mathcal{P}_{n}(x^{(n)}_j;\mu_k) > 0, \quad j \geq 1.
\end{align*}
Since the number of real zeros of $\mathcal{P}_{n}(x;\mu_k)$ cannot excced $n$ and $\mathcal{P}_{n}(x^{(n)}_1;\mu_k) > 0$, \eqref{interlacing non common} follows. For $\mu > 0$, a similar analysis leads to change of sign in \eqref{D(P_n P_mu)} and subsequently, the result follows.\par
The case when $\mathcal{P}_{n}(x;\mu_k)$ and $\mathcal{P}_{n}(x)$ have common zeros is dealt with in a similar manner by excluding such common zeros.
\end{proof}

\begin{remark}
In \cite{Castillo monotonicity R1 2015}, a similar result was proved by transforming the $R_I$ type recurrence relation into a three-term Frobenius type recursion. However, our approach differs because we have not used the Delsarte-Genin mapping discussed in the aforementioned paper. Instead, we have used the positivity of zeros in L-orthogonal polynomials, as demonstrated in \cite{walter toda laurent 2002}.
\end{remark}

\begin{theorem}
Let  $x^{(n)}_j$ and $x^{(n)}_j(\vec{\mu})$, $j=1,2,\ldots,n$ be the zeros of $\mathcal{P}_{n}(x)$ and $\mathcal{P}_{n}(x;\vec{\mu})$, respectively, where $\vec{\mu}=(\mu_0,\ldots, \mu_{n-1})$. If $\mu_k > 0$ {\rm(}or $<0${\rm)}, $k=0,1,\ldots,n-1$, then the zeros of $\mathcal{P}_{n}(x;\vec{\mu})$ are strictly increasing {\rm(}or strictly decreasing{\rm)} functions of $\mu_0,\ldots,\mu_{n-1}$.
\end{theorem}
\begin{proof}
Let $x^{(n)}_j(\mu_0)$, $j=1,2,\ldots,n$ be the zeros of $\mathcal{P}_{n}(x;\mu_0)$. Then, applying \Cref{interlacing theorem mu} and assuming $\mu_0 > 0$, we get
\begin{align*}
	x^{(n)}_j < x^{(n)}_j(\mu_0), \quad j=1,2,\ldots,n.
\end{align*}
Now, considering $\mathcal{P}_{n}(x;\mu_0)$ as a new sequence of $\mathcal{R}_{I}$ polynomials and perturbing $c_1$ by $\mu_1 > 0$, we have the polynomials $\mathcal{P}_{n}(x;\mu_0,\mu_{1})$ whose zeros are $x^{(n)}_j(\mu_0,\mu_{1})$, $j=1,2,\ldots,n$. Again, applying \Cref{interlacing theorem mu}, we obtain
\begin{align*}
	x^{(n)}_j(\mu_0) < x^{(n)}_j(\mu_0, \mu_1), \quad j=1,2,\ldots,n.
\end{align*}
Repeating same process $n-2$ times, we have
\begin{align*}
	x^{(n)}_j(\mu_0) < x^{(n)}_j(\mu_0, \mu_1) < \cdots < x^{(n)}_j(\mu_0,\ldots, \mu_{n-1}) \quad j=1,2,\ldots,n.
\end{align*}
To show that the zeros of $\mathcal{P}_{n}(x;\vec{\mu})$ are increasing fucntions of $\mu_k$, $k=0,1,\ldots,n-1$, we will replace $\mu_k$ by $\mu_k+\epsilon_k$, $\epsilon_k > 0$, $k=0,1,\ldots,n-1$ and establish that $x^{(n)}_j(\vec{\mu}) < x^{(n)}_j(\vec{\mu},\vec{\epsilon})$, $j=1,2,\ldots,n$, where $\vec{\epsilon}=(\epsilon_0,\ldots,\epsilon_{n-1})$ and $x^{(n)}_j(\vec{\mu},\vec{\epsilon})$ be the zeros of polynomial $\mathcal{P}_{n}(x;\vec{\mu}, \vec{\epsilon})$.
First of all, replacing $\mu_0$ by $\mu_0+\epsilon_0$, the polynomials $\mathcal{P}_{n}(x;\vec{\mu}, \epsilon_0)$ are obtained. Proceeding as per the proof of \Cref{interlacing theorem mu}, it can be shown that
\begin{align*}
	x^{(n)}_j(\vec{\mu}) < x^{(n)}_j(\vec{\mu}, \epsilon_0), \quad j=1,2,\ldots,n.
\end{align*}
where $x^{(n)}_j(\vec{\mu}, \epsilon_0)$, $j=1,2,\ldots,n$ are the zeros of $\mathcal{P}_{n}(x;\vec{\mu}, \epsilon_0)$. Further, let $x^{(n)}_j(\vec{\mu}, \epsilon_0, \epsilon_1)$, $j=1,2,\ldots,n$ be the zeros of polynomial $\mathcal{P}_{n}(x;\vec{\mu}, \epsilon_0, \epsilon_1)$ obtained on replacing $\mu_1$ by $\mu_1+\epsilon_1$. From \Cref{interlacing theorem mu}, we get
\begin{align*}
	x^{(n)}_j(\vec{\mu}, \epsilon_0) < x^{(n)}_j(\vec{\mu}, \epsilon_0, \epsilon_1), \quad j=1,2,\ldots,n.
\end{align*}
Using similar argument, we conclude that
\begin{align}
x^{(n)}_j(\vec{\mu}) < x^{(n)}_j(\vec{\mu}, \epsilon_0) < x^{(n)}_j(\vec{\mu}, \epsilon_0, \epsilon_1), < \cdots< x^{(n)}_j(\vec{\mu},\vec{\epsilon}) \quad j=1,2,\ldots,n,
\end{align} 
and the proof is complete.
\end{proof}

\begin{corollary}\label{Corolary monotonicity R1 pp}
Let $x^{(n)}_j(\mu_k,\mu_{k+1})$, $j=1,2,\ldots,n$ be the zeros of $\mathcal{P}_{n}(x;\mu_k,\mu_{k+1})$ with $\mu_k>0$ and $\mu_{k+1}>0$ obtained by perturbing two consecutive recurrence coefficients. Then for fixed $j$ and $n \geq k$, $x^{(n)}_j(\mu_k,\mu_{k+1})$ are strictly increasing functions of $\mu_k$ and $\mu_{k+1}$. Similarly, the zeros of $\mathcal{P}_{n}(x;\mu_k,\mu_{k+1})$ are strictly decreasing functions of $\mu_k$ and $\mu_{k+1}$ whenever $\mu_k<0$ and $\mu_{k+1}<0$.
\end{corollary}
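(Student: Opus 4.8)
\section*{Proof proposal}

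The plan is to avoid locating zeros directly and instead bootstrap the single--perturbation interlacing result \Cref{interlacing theorem mu} into a monotonicity statement, by reading a change in one perturbation parameter as a fresh co-recursive perturbation of an intermediate L-orthogonal sequence. The first thing I would record is that, for every fixed admissible pair $(\mu_k,\mu_{k+1})$, the doubly perturbed family $\{\mc{P}_n(x;\mu_k,\mu_{k+1})\}$ is again a sequence of $R_I$-type polynomials obeying \eqref{special R1}, now with recurrence coefficients $c_k+\mu_k$ and $c_{k+1}+\mu_{k+1}$ replacing $c_k,c_{k+1}$ and every other coefficient unchanged. When $\mu_k,\mu_{k+1}>0$ these modified coefficients stay positive, so the hypotheses of \Cref{walter zeros lemma} persist and the zeros of each $\mc{P}_n(x;\mu_k,\mu_{k+1})$ are real, simple, positive and interlace those of $\mc{P}_{n+1}(x;\mu_k,\mu_{k+1})$.

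With this preparation I would establish strict monotonicity in $\mu_k$ first, holding $\mu_{k+1}$ fixed. Given $\mu_k^{(1)}<\mu_k^{(2)}$, I view $\{\mc{P}_n(x;\mu_k^{(2)},\mu_{k+1})\}$ as the co-recursive perturbation, by the positive amount $\delta=\mu_k^{(2)}-\mu_k^{(1)}$ at level $k$, of the base sequence $\{\mc{P}_n(x;\mu_k^{(1)},\mu_{k+1})\}$. Since $\mc{P}_n(x;\mu_k,\mu_{k+1})=\mc{P}_n(x)$ for $n\le k$ by \Cref{Theorem s_k_x}, the $k$-th polynomial of the base sequence is still $\mc{P}_k$, so the Casoratti identity \eqref{D(P_n P_mu)} applies with $-\delta$ and $\mc{P}^2_k$ on the right-hand side. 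Feeding the positivity and interlacing of the base zeros (from the previous paragraph) into the sign argument of \Cref{interlacing theorem mu} then yields, for the non-common zeros, $x^{(n)}_j(\mu_k^{(1)},\mu_{k+1})<x^{(n)}_j(\mu_k^{(2)},\mu_{k+1})$, i.e.\ strict increase in $\mu_k$. The argument for $\mu_{k+1}$ is identical after shifting the perturbation to level $k+1$, so that the relevant determinant carries the square of the base sequence's $(k+1)$-th polynomial, valid once $n\ge k+1$. For negative parameters I would run the same reduction but invoke the $\mu<0$ branch of \Cref{interlacing theorem mu}, which reverses the interlacing and therefore shifts the moving zeros to the left, giving the stated decreasing behaviour; the zeros that $\mc{P}_n(x;\mu_k,\mu_{k+1})$ shares with $\mc{P}_k$, identified in \Cref{corollary zeros sharing}, stay fixed throughout and are excluded from the strict inequalities.

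The main obstacle I anticipate is the bookkeeping required to apply \Cref{interlacing theorem mu} iteratively rather than only once against the unperturbed sequence: I must check that each intermediate family genuinely satisfies \eqref{special R1} with positive coefficients so that \Cref{walter zeros lemma} is available for the base zeros, and that the polynomial entering the Casoratti determinant after the base change is the unperturbed $\mc{P}_k$ (respectively the base $(k+1)$-th polynomial) and not a perturbed one. A secondary point is reconciling the two one-parameter monotonicities into joint monotonicity in $(\mu_k,\mu_{k+1})$ and confirming that the order in which the two consecutive perturbations are applied is immaterial; this follows because the modifications of $c_k$ and $c_{k+1}$ act on disjoint recurrence steps and hence commute.
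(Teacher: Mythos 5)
The paper offers no proof of this corollary at all: it is stated immediately after \Cref{interlacing theorem mu} and is then only supported by numerical tables and point plots for the L-Jacobi case. Your proposal therefore supplies an argument where the paper has none, and the route you take -- reading an increase of one parameter as a fresh co-recursive perturbation of an already-perturbed base sequence and re-running the Casoratti/sign argument of \Cref{interlacing theorem mu}, then composing the two one-parameter monotonicities -- is exactly the argument the corollary must implicitly rest on. Your bookkeeping is right on the two points that matter: the level-$k$ polynomial of any base sequence perturbed only at levels $k$ and $k+1$ is the unperturbed $\mc{P}_k$, and when the perturbation sits at level $k+1$ the determinant \eqref{D(P_n P_mu)} carries the square of the base sequence's $(k+1)$-th polynomial, which is harmless because it enters squared.

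The one substantive gap is the negative case. The sign argument in \Cref{interlacing theorem mu} evaluates the perturbed polynomial at the zeros of the \emph{base} polynomial and needs $(-1)^j\widetilde{\mc{P}}_{n+1}(\widetilde{x}^{(n)}_j)>0$, which is drawn from \Cref{walter zeros lemma}; that lemma is available only for sequences of the form \eqref{special R1} with positive $c_n$ and $\lambda_n$. When $\mu_k,\mu_{k+1}>0$ you correctly observe the intermediate coefficients stay positive. But when $\mu_k,\mu_{k+1}<0$, every intermediate base sequence in your two-step composition (e.g.\ the one with coefficients $c_k+\mu_k^{(1)}$ or $c_{k+1}+\mu_{k+1}$) may have a nonpositive coefficient, and then neither the reality, simplicity and positivity of its zeros nor the sign pattern of $\widetilde{\mc{P}}_{n+1}$ at them is guaranteed. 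You flag the need to ``check that each intermediate family genuinely satisfies \eqref{special R1} with positive coefficients'' but verify it only for positive perturbations; to close the negative case you must either add the hypothesis $c_k+\mu_k>0$, $c_{k+1}+\mu_{k+1}>0$ (so \Cref{walter zeros lemma} applies to the intermediate families) or replace the iteration by a different argument (for instance a continuity or derivative-in-$\mu$ argument at $\mu=0$). The paper itself is silent on this restriction, so your proposal is no weaker than the source, but as written the decreasing half of the statement is not fully proved.
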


\subsection{Perturbation in the chain sequence and Carath\'eodary function}\label{Concluding remarks}
Recurrence relation of the form
\begin{align}\label{r_n R1 paper}
\mathcal{P}_{n+1}(z) = ((1+i\beta_{n+1})z+(1-i\beta_{n+1}))\mathcal{P}_{n}(z)- 4d_{n+1}z\mathcal{P}_{n-1}(z),
\end{align}
with initial conditions $\mathcal{P}_0(z) = 1$ and $\mathcal{P}_1(z) = (1+i\beta_{1})z+(1-i\beta_{1})$ are studied extensively in \cite{Castillo monotonicity R1 2015, Castillo costa ranga veronese Favard theorem 2014} to cite a few. The existence of a unique non-trivial probability measure supported on the unit disc is known since the classical work of Delsarte and Genin, and has recently been revisited using a different technique, i.e., using Markov's theorem, in \cite{Castillo Constr. Approx. 2022}.  Further, when $\{\beta_n\}_{n=1}^\infty$ is a sequence of real numbers, we have 
\begin{align}\label{phi_n from r_n R1 pp}
\varphi_0(z)=1, \quad \varphi_{n}(z)\prod_{k=1}^{n}(1+i\beta_{k})=\mathcal{P}_{n}(z)-2(1-m_n)\mathcal{P}_{n-1}(z),
\end{align}
as the corresponding sequence of monic OPUC where $\{m_n\}_{n=1}^\infty$ is the minimal parameter sequence of the the chain sequence $\{d_{n+1}\}_{n=1}^\infty$. Perturbation of co-recursive type in $\beta_n$ is investigated in \cite{Castillo monotonicity R1 2015} from the view point of interlacing properties and some new inequalities involving zeros are derived. Note that, under certain restrictions, \eqref{r_n R1 paper} can be viewed as a special case of \eqref{R1}. The polynomials $\mathcal{P}_{n+1}(z)$ generated by \eqref{r_n R1 paper} are called the para-orthogonal polynomials on the unit circle (POPUC), and the most general results related to zeros of POPUC can be found in \cite{Castillo LAA 2019, Castillo Petronilho PAMS 2018}. This section aims to study the co-dilation in \eqref{r_n R1 paper} and we will see how a perturbation in the chain sequence $\{d_{n+1}\}_{n \geq 1}$ leads to some interesting consequences in function theory and orthogonal polynomials. \par 

The Hermitian Perron-Carath\'eodary fractions, or HPC-fractions, arising in the study of the connection between the Szeg\H{o} polynomials and the continued fractions are given as
\begin{align}\label{HPC fraction}
\delta_0 - \frac{2\delta_{0}}{1} \mathbin{\genfrac{}{}{0pt}{}{}{+}} \frac{1}{\bar{\delta}_1 z} \mathbin{\genfrac{}{}{0pt}{}{}{+}} \frac{(1-|\delta_1|^2)z}{\delta_1} \mathbin{\genfrac{}{}{0pt}{}{}{+}} \frac{1}{\bar{\delta}_2 z} \mathbin{\genfrac{}{}{0pt}{}{}{+}} \frac{(1-|\delta_2|^2)z}{\delta_2}\ldots,
\end{align}
and are completely determined by $\delta_{n} \in  \mathbb{C}$, where $\delta_{0} \neq 0 $ and $\delta_{n} \neq 1 $, $n \geq 1$. When $\delta_{0} > 0 $ and $|\delta_{n}| < 1 $, $n \geq 1$, \eqref{HPC fraction} is known as the positive PC fraction (PPC-fration). Then $\phi_n(z)$ are the odd ordered denominators $\mathcal{Q}_{2n+1}(z)$ and $\phi^*_n(z)$ are the even ordered denominators $\mathcal{Q}_{2n}(z)$, where $\mathcal{Q}_n(z)$, a polynomial of degree $n$, is the denominator of $n^{th}$ approximant of PPC-fration \cite{Jones njstad thron 1989}. Having set $\delta_{n}=\phi_n(0)$ as reflection coefficients, the following are the recurrence relations for Szeg\H{o} polynomials
\begin{align*}
	\phi^*_n(z) &= \bar{\delta}_n z\phi_{n-1}(z)+\phi^*_{n-1}(z), \\
	\phi_n(z) &= \delta_n\phi^*_{n}(z)+(1-|\delta_n|^2)z\phi_{n-1}(z), \quad n \geq 1.
\end{align*}
Under the condition 
\begin{align}\label{delta_n condition}
\delta_{n+1}\delta_{n}=\delta_{n+1}-\delta_{n}, \quad \delta_{0} > 0  ~ \text{and} ~ |\delta_{n}| < 1 , \quad n \geq 1,
\end{align}
the chain sequence $\{d_{n+1}\}_{n \geq 1}$ is such that $d_{n+1}=(1-m_n)m_{n+1}$, where $m_{n}=(1-\delta_n)/2$, $n \geq 1$, is the minimal parameter sequence, i.e.,
\begin{align}\label{lambda 1 and 2}
	d_{n+1}=\dfrac{1}{4}(1+\delta_{n})(1-\delta_{n+1})=\dfrac{1}{4}(1-2\delta_{n}\delta_{n+1}).
	% \lambda_{n+1}^{(1)}=\dfrac{1}{4}(1-\alpha_{n-2})(1+\alpha_{n-1}), \qquad
\end{align}
A sequence of para-orthogonal polynomials satisfying 
\begin{align}\label{R1 kiran}
\mathcal{P}_{n+1}(z) &= (z+1)\mathcal{P}_n(z)-4d_{n+1}z\mathcal{P}_{n-1}(z), \quad n \geq 1,
%\mathcal{P}_{n+1}^{(2)}(z) &= (z+1)\mathcal{P}_n^{(2)}(z)-4\lambda_{n+1}^{(2)}z\mathcal{P}_{n-1}^{(2)}(z), \quad n \geq 1, 
\end{align}
with initial conditions $\mathcal{P}_0(z)=1$ and $\mathcal{P}_1(z)=z+1$ has been obtained in \cite[Proposition 3.1]{KKB ranga swami 2016} together with a sequence of Szeg\H{o} polynomials $\phi_n(z)$ having $\delta_{n} \in \mathbb{R}$ as the reflection coefficients and satisfying \eqref{delta_n condition}. The Carath\'eodary function 
\begin{align*}
\mathcal{C}(z)= \dfrac{1+(1-2\sigma)z}{1-z}, \quad |z|< 1, \quad 0 < \sigma <1,
\end{align*}
corresponds to a PPC-fraction with the parameter $\delta_n = -\frac{1}{n+\frac{\sigma}{1-\sigma}}$ obeying \eqref{delta_n condition}. \par 
%Here, $\alpha_{-1}=1$. 
Now, from \eqref{lambda 1 and 2}, we have
\begin{align*}
	\delta_{n+1}= \dfrac{1-4d_{n+1}}{2\delta_{n}}.
\end{align*}
%or equivalently,
%\begin{align}%\label{alpha n continued fraction}
%\begin{split}
%\alpha_{n-1} &= -1 + \frac{4\lambda_{n+1}^{(1)}}{2} \mathbin{\genfrac{}{}{0pt}{}{}{-}} \frac{4\lambda_{n}^{(1)}}{2} \mathbin{\genfrac{}{}{0pt}{}{}{+}}   \mathbin{\genfrac{}{}{0pt}{}{}{\cdots}} \frac{2\lambda_2^{(1)}}{1}, \\
%\alpha_{n} &= 1 - \frac{4\lambda_{n+1}^{(2)}}{2} \mathbin{\genfrac{}{}{0pt}{}{}{-}} \frac{4\lambda_{n}^{(2)}}{2} \mathbin{\genfrac{}{}{0pt}{}{}{-}}  \mathbin{\genfrac{}{}{0pt}{}{}{\cdots}}  \frac{2\lambda_2^{(2)}}{1}. 
%\end{split}
%\end{align}
Therefore, from the sequence $\{d_{n+1}\}_{n \geq 1}$, the sequence $\{\delta_{n} \}_{n \geq 0}$ can be completely determined. Perturbing \eqref{R1 kiran} as \eqref{co-dilated condition R1} such that the new sequence $\{d'_{n+1}\}_{n \geq 1}$ is again a positive chain sequence, we get a new sequence of the Szeg\H{o} polynomials, say $\hat{\phi}_n(z)$ with $\{\hat{\delta}_{n}\}_{n \geq 0}$ as the reflection coefficients. Suppose that the perturbation is for $n=k$, then $d'_{k+1}= \nu_{k+1}d_{k+1}$ and
\begin{align*}
\hat{\delta}_{k+1} =\dfrac{1-4d'_{k+1}}{2\hat{\delta}_{k}} = \dfrac{1-4\nu_{k+1}d_{k+1}}{2\hat{\delta}_{k}}	~ \text{and} ~ \hat{\delta}_{n+1} =\dfrac{1-4d_{n+1}}{2\hat{\delta}_{n}}, \quad n > k.
\end{align*}
With these $\hat{\delta}_{n}$'s as the parameters of a PPC-fraction, there corresponds a Carath\'eodary function, say $\mathcal{\hat{C}}(z)$. Observe that the structure of $\mathcal{\hat{C}}(z)$ depends on the choice of $\nu_{k+1}$ and the level at which perturbation is done. This makes it difficult to determine exact form of $\mathcal{\hat{C}}(z)$. However, for some particular cases, we can find its precise form. One such choice is proposed in the next result by summarizing the above details.
%\begin{align}\label{alpha n hat}
%\begin{split}
%\hat{\alpha}_{n-1} &= -1 + \frac{4\lambda_{n+1}^{(1)}}{2} \mathbin{\genfrac{}{}{0pt}{}{}{-}} \frac{4\lambda_{n}^{(1)}}{2} \mathbin{\genfrac{}{}{0pt}{}{}{+}} \mathbin{\genfrac{}{}{0pt}{}{}{\cdots}} \frac{4\nu_{k+1}^{(1)}\lambda_{k+1}^{(1)}}{2} \mathbin{\genfrac{}{}{0pt}{}{}{+}}  \mathbin{\genfrac{}{}{0pt}{}{}{\cdots}} \frac{2\lambda_2^{(1)}}{1}, \\
%\hat{\alpha}_{n} &= 1 - \frac{4\lambda_{n+1}^{(2)}}{2} \mathbin{\genfrac{}{}{0pt}{}{}{-}} \frac{4\lambda_{n}^{(2)}}{2} \mathbin{\genfrac{}{}{0pt}{}{}{-}} \mathbin{\genfrac{}{}{0pt}{}{}{\cdots}} \frac{4\nu_{k+1}^{(2)}\lambda_{k+1}^{(2)}}{2} \mathbin{\genfrac{}{}{0pt}{}{}{+}} \mathbin{\genfrac{}{}{0pt}{}{}{\cdots}} \ \frac{2\lambda_2^{(2)}}{1},
%\end{split}
%\end{align}

\begin{proposition}
%The Szeg\H{o} polynomials $\hat{\phi}_n(z)$ with Verblunsky coefficients $\{\hat{\delta}_{n}\}$ and hence 
Consider the sequence $\{\delta_{n} \}_{n \geq 0}$ of reals under the restriction $\delta_{n+1}\delta_{n}=\delta_{n+1}-\delta_{n}$, $\delta_{0} > 0$ and $|\delta_{n}| < 1$, $n \geq 1$. If $\mathcal{C}(z)$ is the Carath\'eodory function whose PPC-fraction can be obtained from $\delta_n$, then $\hat{\delta}_{n}$ given as
%\begin{align*}
%\hat{\alpha}_{n-1} &= \alpha_{n-1}, \quad n < k, \\
%\hat{\alpha}_{k-1} &= \alpha_{k-1}+\dfrac{4(\nu_{k+1}^{(1)}-1)\lambda_{k+1}^{(1)}}{1-\alpha_{k-2}}, \qquad 
%\hat{\alpha}_{n-1} =-1+ \dfrac{4\lambda_{n+1}^{(1)}}{1-\hat{\alpha}_{n-2}}, \quad n > k,
%\end{align*}
%and
\begin{align*}
\hat{\delta}_{n+1} &= \delta_{n+1}, \hspace{7.9cm} n < k, \\
\hat{\delta}_{k+1} &= \delta_{k+1}+\dfrac{2(1-\nu_{k+1})d_{k+1}}{\delta_{k}}, \qquad
\hat{\delta}_{n+1} =\dfrac{1-4d_{n+1}}{2\hat{\delta}_{n}}, \quad n > k,
\end{align*}
gives the PPC-fraction corresponding to Carath\'eodary function $\mathcal{\hat{C}}(z)$. Further, if $\nu_{k+1}=\dfrac{1+2\delta_{k}\delta_{k+1}}{1-2\delta_{k}\delta_{k+1}}$, then $\hat{\delta}_{k+1}=-\delta_{k+1}$, $n \geq k$. In particular, if $\nu_{2}=\dfrac{1+2\delta_{1}\delta_{2}}{1-2\delta_{1}\delta_{2}}$, then $\hat{\delta}_{n}=-\delta_{n}=\dfrac{1}{n+\frac{\sigma}{1-\sigma}}$, $n \geq 1$ and the corresponding Carath\'eodary function is given by $1/\mathcal{C}(z)$.
\end{proposition}
The OPUC with $\{-\delta_{n} \}_{n \geq 0}$ as reflection coefficients and corresponding Carath\'eodary function has been studied in \cite{Ronning 1992}.
%\begin{proof}
%Direct computation using \eqref{DGPOP}, \eqref{alpha n continued fraction} and \eqref{alpha n hat} gives the result. Details of a similar kind of proof can be found in \cite{Castillo perturbed polynomials via szego transform 2017}.
%\end{proof}
%DG1POP and DG2POP polynomials find their application in frequency analysis problems \cite{Bracciali Li Ranga 2005}. Their perturbed version will also be useful in study of similar physical problems.

\section{Illustrative examples}\label{Illustrative examples}
\begin{example}\label{example 1 R1 paper}
Consider the sequence of $R_I$ polynomials generated by the recurrence
\begin{align}\label{Example 2 recurrence R1 paper}
	\mathcal{P}_{n+1}(z) = \left(z-\dfrac{b-c-n}{b+n}\right)\mathcal{P}_{n}(z) -\dfrac{n(n+c-1)}{(b+n-1)(b+n)}z\mathcal{P}_{n-1}(z),
\end{align}
with initial conditions $\mathcal{P}_{0}(z) = 1$ and $\mathcal{P}_1(z) = z-\dfrac{b-c}{b}$. 

%Hence,
%\begin{align*}
%	c_n = \dfrac{b-c-n}{b+n}, \quad \lambda_{n} = \dfrac{n(n+c-1)}{(b+n-1)(b+n)}, \quad a_n = 0.
%\end{align*}
\end{example}
If $(a)_n$ is the pochhammer symbol and $F$ represents the Gaussian hypergeometric function, use of the contiguous relation
\begin{align*}
	(c-a)F(a-1,b;c;z)=(c-2a-(b-a)z)F(a,b;c;z)+a(1-z)F(a+1,b;c;z),
\end{align*}
shows that the monic polynomial
\begin{align*}
	\mathcal{P}_{n}(z) = \dfrac{(c)_n}{(b)_n}F(-n,b;c;1-z),
\end{align*}
satisfies \eqref{Example 2 recurrence R1 paper} \cite{Ranga PAMS 2010}. For $b= \eta+1$ and $c= 2\eta+2$ with $\eta > -1/2$, \eqref{Example 2 recurrence R1 paper} reduces to 
\begin{align*}
\mathcal{P}_{n+1}(z) = (z+1)\mathcal{P}_{n}(z)-\dfrac{n(2\eta+n+1)}{(\eta+n)(\eta+n+1)}z \mathcal{P}_{n-1}(z),
\end{align*}
which is of the form \eqref{r_n R1 paper} with $\beta_{n}=0$.
\subsection{Co-recursion} It can be easily verified that, for $\eta=0$, the solution of the recurrence
\begin{align}\label{r_n corecursion}
&\mathcal{P}_{n+1}(z) = (z+1)\mathcal{P}_{n}(z)-z \mathcal{P}_{n-1}(z),
\end{align}
is given by
\begin{align}
\mathcal{P}_{n}(z)=\dfrac{z^{n+1}-1}{z-1}, \quad n \geq 0.
\end{align}

Now, let us make a perturbation at the beginning of the sequence. If $\mu > 0$, we write $\mathcal{P}_1(z) = z+1-\mu$. Consequently, we have
\begin{align*}
	\mathcal{P}_{n+1}(z;\mu) = \mathcal{P}_{n+1}(z)-\mu \mathcal{P}_{n}(z).
\end{align*}
Note that this is the case for perturbation for $k=0$. Therefore, $\mathcal{P}_{0}(z)=1$ and $\mathcal{P}^{(0)}_{n}(z)=\mathcal{P}_{n}$ produce the above relation. If $\mu = 1$, then
\begin{align*}
	\mathcal{P}_{n+1}(z;1) &= \mathcal{P}_{n+1}(z)-\mathcal{P}_{n}(z) = z^{n+1} .\nonumber
\end{align*} 
The polynomial $\mathcal{P}_{n+1}(z;1)$ satisfies \eqref{r_n corecursion} but with perturbed initial conditions $\mathcal{P}_{0}(z;1) = 1$, $\mathcal{P}_1(z;1) = z$, which shows the validity of our results in \Cref{Structural relations}. 

\subsection{Co-dilation and chain sequences}
In this part, we will see the effect of the co-dilation in the chain sequence on the corresponding parameter sequences, Szeg\H{o} polynomials, reflection coefficients, and associated measures.
%Closely related to the chain sequences is the important concept of complementary chain sequences which is defined as the following:
%\begin{definition}\cite{KKB ranga swami 2016}
%If $\{m_{n} \}_{n \geq 0}$ is the minimal parameter sequence of $\{d_{n} \}_{n \geq 1}$, then the sequence $\{a_{n} \}_{n \geq 1}$, whose minimal parameter sequence is a new sequence $\{k_{n} \}_{n \geq 0}$, is the complementary chain sequence of $\{d_{n} \}_{n \geq 1}$ when $k_0 = 0$ and $k_n = 1-m_n$, $n \geq 1$.
%\end{definition}
For $\eta > -1/2$, the positive chain sequence $\{d_{n+1} \}_{n \geq 1}$, where
\begin{align*}
	d_{n+1}= \dfrac{1}{4} \dfrac{n(2\eta+n+1)}{(\eta+n)(\eta+n+1)},
\end{align*}
has minimal parameter sequence $\{m_{n} \}_{n \geq 0}$, where $m_n=\dfrac{n}{2(\eta+n+1)}$ and a maximal parameter sequence $\{M_{n} \}_{n \geq 0}$, where $M_n=\dfrac{2\eta+n}{2(\eta+n)}$, which makes $\{d_{n+1} \}_{n \geq 1}$ non-SPPCS \cite{Bracciali ranga swami paraorthogonal 2016}. Clearly, for $\eta=0$, $\{d_{n+1}=1/4 \}_{n \geq 1}$ is not an SPPCS and its minimal and maximal parameter sequences are given as
\begin{align*}
m_{n}=\dfrac{n}{2n+2}, \quad n \geq 0, \quad \rm{and} \quad M'_{n}=\dfrac{1}{2}, \quad n \geq 0.
\end{align*} 
The corresponding para-orthogonal polynomials are
\begin{align*}
	\mathcal{P}_n(z)=\dfrac{z^{n+1}-1}{z-1}, \quad n \geq 0,
\end{align*}
and hence, the monic OPUC is given by
\begin{align*}
	\phi_{n}(z)=\dfrac{(n+1)z^{n}+nz^{n-1}+(n-1)z^{n-2}+\ldots+2z+1}{n+1}, \quad n \geq 0,
\end{align*}
with $\gamma_{n-1}=-\dfrac{1}{n+1}$ as reflection coefficients and $d\mu(z) = \dfrac{|z-1|^2}{4i\pi z}dz$ as the measure of orthogonality  \cite{Esmail Ranga 2018}. \par 
Consider the co-dilated chain sequence $\{1/2, 1/4, 1/4, \ldots \}$ that results from multiplication of $2$ in the first term of the chain sequence $\{1/4, 1/4, 1/4, \ldots \}$. The co-dilated chain sequence is an SPPCS as it is known to have a minimal parameter sequence $\{k'_{n} \}_{n \geq 0}$, where $k'_{0}=0 $ and $k'_{n}=1/2 $, $n \geq 1$, which is also the maximal parameter sequence.
So, from \eqref{r_n R1 paper}, the para-orthogonal polynomials in this case are obtained as
\begin{align*}
\tilde{\mathcal{P}}_n(z)&=z^n+1, \quad n \geq 1.
%q_n(z)& = \dfrac{z^n-1}{z-1},
\end{align*}
The reflection coefficients are zero and $\tilde{\phi}_{n}(z)= z^n$ are the Szeg\H{o}-Chebyshev polynomials with respect to standard Lebesgue measure. \par 
 %It is important to remark that this situation is same as discussed in \cite{KKB ranga swami 2016} when $\eta = 1$. In fact, the co-dilated chain sequence discussed above is the complementary chain sequence for the case $\eta = 1$ in \eqref{a_n+1 complementary}. 
% In this case, the Szeg\H{o} polynomials are 
%\begin{align*}
%	\phi'_{n}(z)= z^n+\left(\omega^{(1)}-\dfrac{n+2}{n+1}\right)z^{n-1}-\dfrac{\omega^{(1)}}{n+1}(z^{n-1}+ \ldots +z)-\dfrac{1}{n+1}, \quad n \geq 1,
%\end{align*}
%with the Verblunsky coefficients $-\phi'_{n}(0)=\dfrac{1}{n+1}$. 
%and the associated measure is $d\mu(\xi) = \dfrac{(1-\xi)(\xi-1)}{4i\pi \xi^2}d\xi$ \cite[Example 2]{Bracciali ranga swami paraorthogonal 2016}.

\subsection{Behaviour of zeros}
\begin{example}
	Let us consider the specialized $R_{I}$ type recurrence relation \eqref{special R1} with $c_n = \dfrac{c+n}{a-n-1}$, $n \geq 0$ and $\lambda_n =\dfrac{n(c+n-a)}{(a-n-1)(a-n)}$, $n \geq 1$.
\end{example}
With these values, we have 
\begin{align}\label{L-jacobi}
	\mathcal{P}^{(a,c)}_{n+1}(x) = \left(x-\dfrac{c+n}{a-n-1}\right)\mathcal{P}^{(a,c)}_n(x)- \dfrac{n(c+n-a)}{(a-n-1)(a-n)} x\mathcal{P}^{(a,c)}_{n-1}(x), \quad n\geq 1,
\end{align}
where $\mathcal{P}^{(a,c)}_{n}(x)$ are the orthogonal Laurent Jacobi polynomials considered in \cite{dimitrov ranga 2002}. It is important to remark some recent contributions especially \cite{Kar Gochayat q-Legueree zeros 2022,Kar JOrdaan Gochayat 2020} regarding distribution of zeros of $q$- Leguerre polynomials and some new hypergeometric type functions. With $\mathcal{P}^{(a,c)}_{-1}(x)=0$ and $\mathcal{P}^{(a,c)}_{0}(x)=1$, we get the monic polynomials
\begin{align*}
	\mathcal{P}^{(a,c)}_n(x) = \dfrac{(c)_n}{(1-a)_n}F(-n,1-a,1-c-n;x).
\end{align*}  

%The zeros of $\mathcal{P}_{6}(x)$ are $1.049267646, 1.179086731, 1.432298501, 1.918525281, 2.955569922, 5.865251919$ and the zeros of $\mathcal{P}_{6}(x;-2)$ are $.1082567303, 1.096706355, 1.381373784, 1.828789066, 2.307145896, 5.677728168$.  

%\begin{table}[h]
\begin{wraptable}{r}{8cm}
\caption{}\label{T1}
\renewcommand{\arraystretch}{1.03}
\centering
\begin{tabular}{|p{2.8cm}|p{3.6cm}|}
	\hline
	Zeros of $\mathcal{P}_{6}(x)$& Zeros of $\mathcal{P}_{6}(x;-2)$\\
	\hline
\hfill	1.049267646 &\hfill 0.108256730\\
	\hline
\hfill	1.179086731 &\hfill 1.096706355\\
	\hline
\hfill	1.432298501 &\hfill 1.381373784\\
	\hline
\hfill	1.918525281 &\hfill 1.828789066\\
	\hline
\hfill	2.955569922 &\hfill 2.307145896\\
	\hline
\hfill	5.865251919 &\hfill 5.677728168\\
	\hline
\end{tabular}
\end{wraptable}
%\end{table}
It was shown \cite{dimitrov ranga 2002} that $c_n$ and $\lambda_n$ are positive when $c>a>N$ or $a<c<1-N$, $n=1,2, \ldots N-1$. This example is used to investigate the behaviour of zeros for the following two cases: \par 
\textbf{Case I:}  $c>a>N$ and $\mu_k < 0$. \par
Perturbing the recurrence coefficient $c_n$ such that $c_4 \rightarrow c_4+\mu_4$, where $\mu_4 = -2$ generates a new sequence of polynomials $\mathcal{P}_{n}(x;-2)$. Note that $\mathcal{P}_{6}(x)$ and $\mathcal{P}_{6}(x;-2)$  have no common zeros as shown in \Cref{T1}. Clearly, these zeros satisfy the interlacing property in accordance with \Cref{interlacing theorem mu}. All the calculations are performed and figures are drawn using Mathematica on a system with an Intel Core i3-6006U CPU @ 2.00 GHz and 8 GB of RAM. %Here, the red diamonds represent the zeros of $\mathcal{P}_{n}(x)$ and blue circles represent the zeros of $\mathcal{P}_{n}(x;\mu_k)$. 

\begin{figure}[H]
	\includegraphics[scale=0.8]{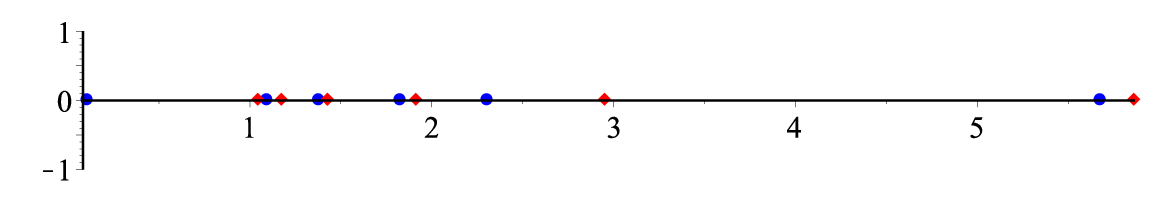}
	\caption{Zeros of $\mathcal{P}_{6}(x)$ (red diamonds) and $\mathcal{P}_{6}(x;-2)$ (blue circles) with parameter values $a=11$ and $c=12$.}
	\label{Fig1}
\end{figure}

%\begin{table}[h]
\textbf{Case II:}  $a<c<1-N$ and $\mu_k > 0$.\par
For this purpose, the recurrence coefficient $c_3$ is perturbed as $c_3 \rightarrow c_3+0.5$, resulting in a new sequence of polynomials $\mathcal{P}_{5}(x;0.5)$. The zeros of $\mathcal{P}_{5}(x)$ and $\mathcal{P}_{5}(x;0.5)$ are listed in \Cref{T2}. Now let us look (see \Cref{Fig2}) at the location of these zeros. The results follows \Cref{interlacing theorem mu}.

\begin{figure}[H]
	\includegraphics[scale=0.8]{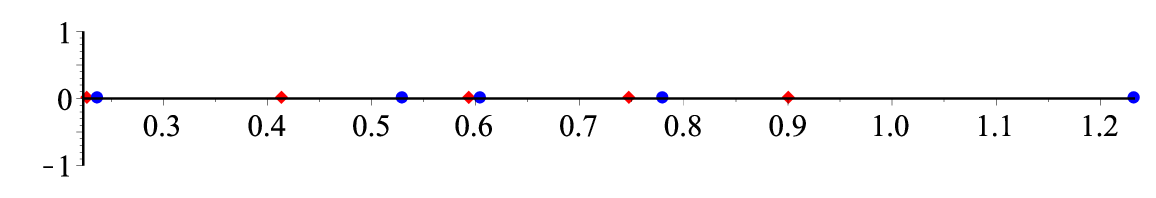}
	\caption{Zeros of $\mathcal{P}_{5}(x)$ (red diamonds) and $\mathcal{P}_{5}(x;0.5)$ (blue circles) with parameter values $a=-12$ and $c=-10$.}
	\label{Fig2}
\end{figure}
%\begin{remark}
%From the observation made above, it can be concluded that the zeros of L-Jacobi polynomials and co-recursive L-Jacobi polynomials polynomials interlace.
%\end{remark} 

\begin{wraptable}{r}{8cm}
\caption{}\label{T2}
\renewcommand{\arraystretch}{1}
\centering
\begin{tabular}{|p{2.5cm}|p{3.3cm}|}
	\hline
	Zeros of $\mathcal{P}_{5}(x)$& Zeros of $\mathcal{P}_{5}(x;0.5)$\\
	\hline
\hfill	0.2275033589 &\hfill 0.2372878714\\
	\hline
\hfill	0.4145169935 &\hfill 0.5301738824\\
	\hline
\hfill	0.5944664499 &\hfill 0.6051319225\\
	\hline
\hfill	0.7482316477 &\hfill 0.7803963908\\
	\hline
\hfill	0.9015129657 &\hfill 1.2332413480\\
	\hline
\end{tabular}
\end{wraptable}

To illustrate \Cref{Corolary monotonicity R1 pp}, the recurrence relation satisfied by L-Jacobi polynomials \eqref{L-jacobi} is taken into consideration.

\textbf{Case I:} $\mu_k > 0$ and $\mu_{k+1}>0$. \par
The zeros of perturbed L-Jacobi polynomials $\mathcal{P}_{5}(x;\mu_3=0.3,\mu_4=0.4)$ obtained by perturbing two successive terms $c_3$ and $c_4$ are listed in \Cref{T3 R1}. These zeros are represented by blue circles in \Cref{Fig3}. Now, take $\mu'_k > \mu_k$ and $\mu'_{k+1} > \mu_{k+1}$. The zeros of $\mathcal{P}_{5}(x;\mu'_3=0.5,\mu'_4=0.6)$ are plotted with green diamonds. The following point plot states the validity of our result for this case.
%$c_3 \rightarrow c_3+0.3$ and $c_4 \rightarrow c_4+0.4$
\begin{figure}[H]
	\includegraphics[scale=0.8]{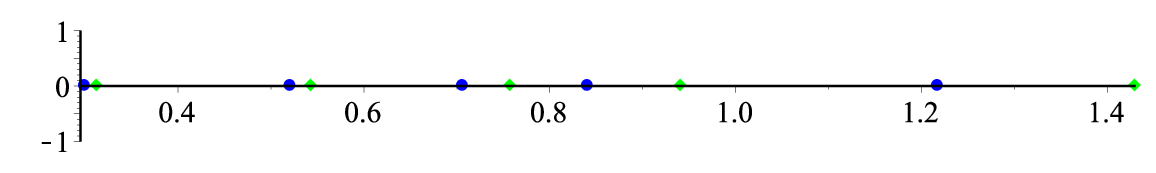}
	\caption{Zeros of $\mathcal{P}_{5}(x;0.3,0.4)$ (blue circles) and $\mathcal{P}_{5}(x;0.5,0.6)$ (green diamonds) with parameter values $a=-12$ and $c=-10$.}
	\label{Fig3}
\end{figure}

\begin{table}[h]
\caption{}
\renewcommand{\arraystretch}{1.05}
\label{T3 R1}
\centering
\begin{tabular}{|p{3cm}|p{4.8cm}|p{4.8cm}|}
	\hline
\hfill	Zeros of $\mathcal{P}_{5}(x)$&\hfill Zeros of $\mathcal{P}_{5}(x;0.3,0.4)$& \hfill Zeros of $\mathcal{P}_{5}(x;0.5,0.6)$\\
	\hline
\hfill	0.2275033589 &\hfill 0.299992667  &\hfill 0.313211285 \\
	\hline
\hfill	0.4145169935 &\hfill 0.521130362  &\hfill 0.543684240 \\  
	\hline
\hfill	0.5944664499 &\hfill 0.706604279   &\hfill 0.757953896 \\
	\hline
\hfill	0.7482316477 &\hfill 0.840953833 &\hfill 0.941308738 \\
	\hline
\hfill	0.9015129657 &\hfill 1.217550273 &\hfill 1.430073255 \\
	\hline
\end{tabular}
\end{table}
\textbf{Case II:} $\mu_k < 0$ and $\mu_{k+1}<0$. \par
The zeros of initial L-Jacobi polynomials $\mathcal{P}_{6}(x)$ and perturbed ones, i.e., $\mathcal{P}_{6}(x;\mu_4=-0.2,\mu_5=-0.25)$ (blue circles) and $\mathcal{P}_{6}(x;\mu'_4=-0.7,\mu'_5=-0.8)$ (green diamonds) are plotted in \Cref{Fig4} and given in \Cref{T4 R1}. Observe that the result holds in this case as well.

\begin{figure}[!htb]
	\includegraphics[scale=0.8]{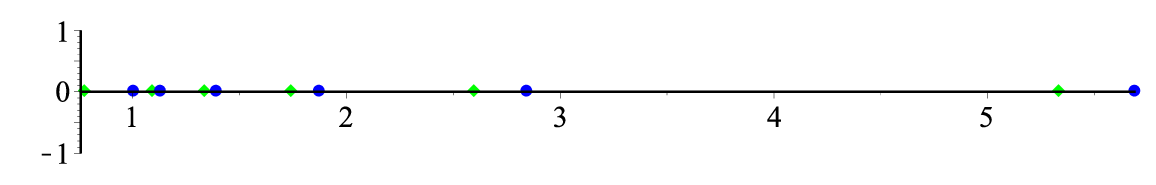}
	\caption{Zeros of $\mathcal{P}_{6}(x;-0.2,-0.25)$ (blue circles) and $\mathcal{P}_{6}(x;-0.7,-0.8)$ (green diamonds) with parameter values $a=11$ and $c=12$.}
	\label{Fig4}
\end{figure}

\begin{table}[h]
\caption{}
\renewcommand{\arraystretch}{1.1}
\label{T4 R1}
\centering
\begin{tabular}{|p{3cm}|p{4.8cm}|p{4.8cm}|}
	\hline
\hfill	Zeros of $\mathcal{P}_{6}(x)$& Zeros of $\mathcal{P}_{6}(x;-0.2,-0.25)$&Zeros of $\mathcal{P}_{6}(x;-0.7,-0.8)$\\
	\hline
\hfill	1.049267646 &\hfill 1.007621277  &\hfill 0.779670549 \\
	\hline
\hfill	1.179086731 &\hfill 1.133428374  &\hfill 1.096706319 \\  
	\hline
\hfill	1.432298501 &\hfill 1.394713267  &\hfill 1.340698443 \\
	\hline
\hfill	1.918525281 &\hfill 1.876397431 &\hfill 1.745550775 \\
	\hline
\hfill	2.955569922 &\hfill 2.846890957 &\hfill 2.601438104 \\
	\hline
\hfill	5.865251919 &\hfill 5.690948695 &\hfill 5.335935809 \\
	\hline
\end{tabular}
\end{table}

%The zeros of $\mathcal{P}_{8}(x)$ comes out to be $.4069296692, 1.843070331, 0.4116240276, .1710311797,\break .7690388590, 1.261114831, 2.415792909, 2.841859819$ and are plotted with red diamonds. When a perturbation $c_3 \rightarrow c_3-0.5$ and $\lambda_{3} \rightarrow 2\times \lambda_{3} $ is made, the zeros of $\mathcal{P}_{8}(x;-0.5,2)$ are $.1516686302, .3578291909, .6942294338, 1.297084073, 1.771409201, 2.454426431, 2.936149025,\break -.1627959860$ represented by blue circles in \Cref{Fig5}. 
\subsection{Note} It has been shown that zeros of co-recursive $R_I$ polynomials and unperturbed polynomials exhibit nice interlacing and monotonicity

\begin{wraptable}{r}{9cm}
	\caption{}\label{T5}
\renewcommand{\arraystretch}{1.1}
\centering
\begin{tabular}{|p{3cm}|p{4.5cm}|}
	\hline
\hfill	Zeros of $\mathcal{P}_{6}(x)$& Zeros of $\mathcal{P}_{6}(x;-0.5,2.5)$\\
	\hline
\hfill	1.049267646 &\hfill 0.828201058 \\
	\hline
\hfill	1.179086731 &\hfill 1.122836473\\  
	\hline
\hfill	1.432298501 &\hfill 1.445822282\\
	\hline
\hfill	1.918525281 &\hfill 1.829440364\\
	\hline
\hfill	2.955569922 &\hfill 3.134029425\\
	\hline
\hfill	5.865251919 &\hfill 5.861098970\\
	\hline
\end{tabular}

%\end{table}
\end{wraptable}

%\begin{example}
%For the purpose of illustration, we would recall the recurrence relation \eqref{Example 1 R1 recurrence} discussed in \Cref{example 1 R1 paper}.
%\end{example}

\noindent properties. However, this may not hold in the case of co-modified polynomials. The zeros of $\mathcal{P}_{6}(x)$ and $\mathcal{P}_{6}(x;-0.5,2.5)$ are listed in \Cref{T5} when a perturbation $c_3 \rightarrow c_3-0.5$ and $\lambda_{3} \rightarrow 2.5\times \lambda_{3} $ is made. Clearly, the zeros do not show any kind of interlacing for this case (see \Cref{Fig5}). Similarly, one can check that co-dilated $R_I$ polynomials do not have such a relationship with the unperturbed ones.
\begin{figure}[!htb]
	\includegraphics[scale=0.8]{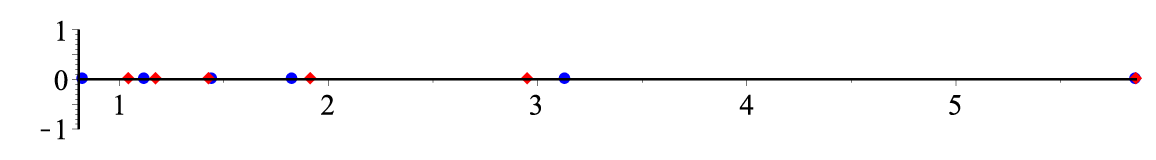}
	\caption{Zeros of $\mathcal{P}_{6}(x)$ (red diamonds) and $\mathcal{P}_{6}(x;-0.5,2.5)$ (blue circles) with parameter values $a=11$ and $c=12$.}
	\label{Fig5}
\end{figure}
\begin{remark}
This observation raises the question about the assumptions on $\mu_k$ and $\nu_k$ to be made such that the co-modified (or co-dilated) $R_I$ polynomials do show some interlacing with the unperturbed polynomials. Although, this subject is not dealt with in this manuscript and attracts further research. An idea to proceed in this direction can be found out in \cite{Liu Ren 2019, Liu Qi 2020}
\end{remark}
%\begin{remark}
%The validity of \Cref{interlacing theorem mu} when $\mathcal{P}_{n}(x)$ and $\mathcal{P}_{n}(x;\mu_k,\nu_k)$ have common zeros can also be established in similar manner using \Cref.
%\end{remark}

%A modification $\mu = \dfrac{c+n}{b+n}$ at the $n-$th level yields
%\begin{align*}
%	\mathcal{P}_{n+1}(z;\mu) &= \mathcal{P}_{n+1}(z)-\mu\mathcal{P}_{n}(z) \\
%	&= \dfrac{(c+1)_n}{(b+1)_n}(1-z)F(-n,b+1;c+1;1-z).
%\end{align*}
%It was shown \cite{KKB Swami 2018} that  $\mathcal{P}_{n+1}(z;\mu)$ satisfies mixed recurrence relation and are related to para-orthogonal polynomials on unit circle. Moreover, biorthogonality of such polynomials has also been discussed in the same. For more details, we refer to \cite{KKB Swami 2018} and references therein.
\subsection{Open problem} In \Cref{Structural relations} and \Cref{Inclusion interlacing and monotonicity}, the behaviour of zeros has been discussed for a particular case when $a_n=0$, $c_n \geq 0,$ and $\lambda_{n} > 0$ in recurrence relation \eqref{R1}, which corresponds to the L-orthogonal polynomials. It would be interesting to investigate the behaviour of the zeros for the case $a_n \neq 0$. More precisely, we want to know what conditions to impose on the non-zero parameters $a_n$, $\lambda_{n}$, and $c_n$ so that the zeros of the corresponding orthogonal polynomials lie on the real line and then investigate the interlacing relation between the perturbed and the original $R_I$ polynomials. At present, no concrete example is available in the literature for the case $a_n \neq 0$, and results for obtaining these conditions are still open, which, if found, will provide interesting consequences. A potential solution to address these situations can be discovered in \cite{Castillo Constr. Approx. 2022}.
 
\subsection*{Acknowledgments} The authors sincerely thank the anonymous referees for their constructive criticism, which helped to improve the manuscript. The first author would like to express his gratitude for the resources and support provided by Bennett University, Greater Noida, India, during the revision of this manuscript. This research work of the second author is supported by the Project No. CRG/2019/000200/MS of Science and Engineering Research Board, Department of Science and Technology, New Delhi, India.

\end{document}